\newcommand{\ud}{\mathrm{d}}
\newcommand{\fv}{true}
\journalname{J Math Chem}
\begin{document}

\title{Meshless Hermite-HDMR finite difference method for high-dimensional
Dirichlet problems
}
%\subtitle{Do you have a subtitle?\\ If so, write it here}

\titlerunning{Hermite-HDMR finite difference for Dirichlet problems}        % if too long for running head

\author{Xiaopeng Luo    \and
        Xin Xu    \and
        Herschel Rabitz %etc.
}

%\authorrunning{Short form of author list} % if too long for running head

\institute{X. Luo \at
              Department of Chemistry, Princeton University, Princeton, NJ 08544, USA \\
              School of Managment and Engineering, Nanjing University, Nanjing, 210008, China\\
              \email{luo\_works@163.com,xiaopeng@princeton.edu}
           \and
           X. Xu \at
              Department of Chemistry, Princeton University, Princeton, NJ 08544, USA \\
              School of Managment and Engineering, Nanjing University, Nanjing, 210008, China \\
              \email{xuxin103@163.com,xx2@princeton.edu}
           \and
           H. Rabitz \at
              Department of Chemistry, Princeton University, Princeton, NJ 08544, USA \\
              Program in Applied and Computational Mathematics, Princeton University, Princeton, NJ 08544, USA\\
              \email{hrabitz@princeton.edu}
}

\date{}
%\date{Received: date / Accepted: date}
% The correct dates will be entered by the editor

\maketitle

\begin{abstract}
 In this paper, a meshless Hermite-HDMR finite difference method is proposed to solve high-dimensional Dirichlet problems. The approach is based on the local Hermite-HDMR expansion with an additional smoothing technique. First, we introduce the HDMR decomposition combined with the multiple Hermite series to construct a class of Hermite-HDMR approximations, and the relevant error estimate is theoretically built in a class of Hermite spaces. It can not only provide high order convergence but also retain good scaling with increasing dimensions. Then the Hermite-HDMR based finite difference method is particularly proposed for solving high-dimensional Dirichlet problems. By applying a smoothing process to the Hermite-HDMR approximations, numerical stability can be guaranteed even with a small number of nodes. Numerical experiments in dimensions up to $30$ show that resulting approximations are of very high quality.
\keywords{High-dimensional Dirichlet problems \and Meshless method \and Finite difference method \and Hermite-HDMR approximation}
% \PACS{PACS code1 \and PACS code2 \and more}
% \subclass{MSC code1 \and MSC code2 \and more}
\end{abstract}

\section{Introduction}
\label{HH.s1}

In this work we propose an approach to numerically solve high-dimensional Dirichlet problems. Specifically, we consider the following boundary value problem for $u:\Omega\subset\mathbb{R}^d\to\mathbb{R}$:
\begin{align}\label{HH.eq.mainE}
    \left\{\begin{array}{l}
        \frac{1}{2}\triangle u(x)-\varphi(x)=0~~\textrm{for}~~x\in\Omega, \\
        u(x)\Big|_{\partial\Omega}=v(x);
    \end{array}\right.
\end{align}
where both $\varphi$ and $v$ are $\mathbb{R}^d\to\mathbb{R}$; and assume that there exists a unique and sufficiently smooth solution for the problem. When $d$ is large, these problems face a serious computational challenge because of the so-called curse of dimensionality \cite{BellmannR1961_CurseOfDimensionality,
GriebelM2004_SparseGrids,GriebelM2006B_SparseGrids}. By this, we mean the computational cost required to approximate or to recover a $d$-dimensional function with a desired accuracy scales exponentially with $d$. There were two main attempts to overcome this difficulty. One approach is to impose very strong regularity assumptions on the target function, and another way is to assume that the target function has an expected structure, such as sparsity \cite{RauhutH2007A_SparseFourierSeries,RauhutH2007A_SparseFourierSeriesII}, or low-rank \cite{MarkovskyI2008A_Low-rankApproximation}, or a low order truncated high-dimensional model representation (HDMR) form \cite{RabitzH1998A_Representations,RabitzH1999A_Representations,
LuoXP2016A_FourierHDMR}. There can be a very close connection between the regularity condition and certain structures, for example, the dominance of low order terms of an HDMR expansion can be guaranteed by imposing the mixed regularity conditions on the target function \cite{LuoXP2016A_FourierHDMR}. Although it is not clear that such strong assumptions are actually satisfied for practical problems, imposing an extra regularity condition remains a common way to reduce the computational cost for high-dimensional problems.

\ifthenelse{\equal{\fv}{true}}{
\begin{figure}[!htb]
%\centering
\begin{minipage}{1.0\textwidth}
\subfigure[Classical FD for structured grid]{
\label{HH.fig.1a}
\includegraphics[width=0.5\textwidth]{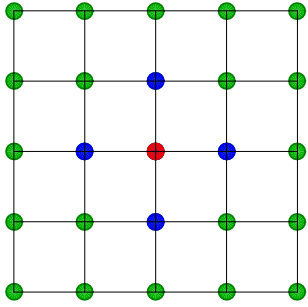}}
\subfigure[LS or RBF based FD for scattered node]{
\label{HH.fig.1b}
\includegraphics[width=0.5\textwidth]{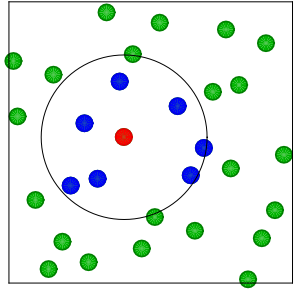}}
\end{minipage}
\caption{Existing FD methods}
\label{HH.fig.1}
\end{figure}
}

Finite difference (FD) methods are one of the simplest and the most important approaches to numerical solutions of PDEs. The traditional FD method, however, is strongly dependent on a structured grid (Fig. \ref{HH.fig.1a}), which severely limits flexibility and scalability \cite{WrightG2006M_ScatteredNodeFiniteDifference}. The FD method has been extended to a more general form (Fig. \ref{HH.fig.1b}) for scattered nodes to remove dependency on structured grids \cite{DingH2004M_LSFD,WrightG2006M_ScatteredNodeFiniteDifference}. Generally, the meshless FD method consists of approximating the derivatives of a sufficiently smooth function $f$ at a reference node (red dot in Fig. \ref{HH.fig.1b}) based on a linear combination of the values of $f$ at some surrounding grid nodes (blue dots in Fig. \ref{HH.fig.1b}), and the relevant FD weights are usually computed using polynomial interpolation on scattered nodes \cite{DingH2004M_LSFD}. In high dimensions, a very important issue is how to link a local interpolating polynomial to an imposed regularity condition.

The starting point of this work is to connect the multiple Hermite series with the HDMR decomposition and the mixed regularity condition. This approach is an extension of the work of Ref. \cite{LuoXP2016A_FourierHDMR} for Hermite polynomials. First, the mixed Hermite space $\mathcal{H}_{mix}^s(\mathbb{R}^d)$ is defined on the basis of the mixed regularity condition, and the Hermite decomposition of $\mathbb{N}_0^d$ is introduced to form an order relation for the multiple Hermite series. According to the order relation, one can truncate it to a certain order $K$ and attain a Hermite-HDMR series up to order $K$. It is mathematically proven that this truncated approximation converges very fast for functions from the mixed Hermite space $\mathcal{H}_{mix}^s(\mathbb{R}^d)$ and only has the degrees of freedom $\bm{O}(K\log^{u-1}K)$, where $u\ll d$. Then, the FD operator is generated with the help of the local weighted Hermite-HDMR expansion including an additional smoothing process.

The remainder of the paper is organized as follows. the local Hermite-HDMR approximation and the relevant error estimates are established in Section \ref{HH.s2}, and the meshless Hermite-HDMR FD method is built in Section \ref{HH.s3}. Numerical experiments for dimensions up to $30$ are given in Section \ref{HH.s5}, and finally, conclusions are presented in Section \ref{HH.s6}.

\section{Local Hermite-HDMR approximation in $\mathcal{H}_{mix}^s(\mathbb{R}^d)$}
\label{HH.s2}

The space $\mathcal{L}_2(\mathbb{R}^d)$ consists of all real-valued measurable functions on $\mathbb{R}^d$ that satisfy
\begin{equation}
    \|f\|_{\mathcal{L}_2(\mathbb{R}^d)}:=\int_{\mathbb{R}^d}
    |f(x)|^2\ud x<\infty,
\end{equation}
and then, for $s\in\mathbb{N}_0$, we define the mixed Hermite space
\begin{equation}
    \mathcal{H}_{mix}^s(\mathbb{R}^d)=\left\{f\in\mathcal{L}_2(\mathbb{R}^d):
    x_1^{s-r_1}\cdots x_d^{s-r_d}\partial^rf(x)
    \in\mathcal{L}_2(\mathbb{R}^d),~|r|_\infty\leqslant s\right\}
\end{equation}
with the norm
\begin{equation}
    \|f\|_{\mathcal{H}_{mix}^s(\mathbb{R}^d)}=
    \sum_{0\leqslant|r|_\infty\leqslant s}\left\|x_1^{s-r_1}\cdots x_d^{s-r_d}
    \partial^rf(x)\right\|_{\mathcal{L}_2(\mathbb{R}^d)},
\end{equation}
where $r=(r_1,\cdots,r_d)\in\mathbb{N}_0^d$ is a multi-index with
\begin{equation*}
    |r|_\infty=\max_{1\leqslant j\leqslant d}r_j
\end{equation*}
and
\begin{equation*}
    \partial^r=\left(\frac{\partial}{\partial x_1}\right)^{r_1}
    \cdots\left(\frac{\partial}{\partial x_d}\right)^{r_d}.
\end{equation*}
Here $\mathcal{H}_{mix}^s(\mathbb{R}^d)$ is a Banach space with respect to the norm $\|\cdot\|_{\mathcal{H}_{mix}^s(\mathbb{R}^d)}$. Further, for any $\lambda\in\mathbb{R}$ and $a\in\mathbb{R}^d$, let the space
\begin{equation}
    \mathcal{L}_2(\mathbb{R}^d,e^{-\lambda^2\|x-a\|^2}):=\left\{
    f\in\mathcal{L}_2(\mathbb{R}^d):\int_{\mathbb{R}^d}
    |f(x)|^2e^{-\lambda^2\|x-a\|^2}\ud x<\infty\right\},
\end{equation}
then
\begin{equation*}
    \mathcal{L}_2(\mathbb{R}^d,e^{-\lambda^2\|x-a\|^2})\subset
\mathcal{L}_2(\mathbb{R}^d),
\end{equation*}
and the multiply Hermite function sequence $\{H_{m,\lambda}(x-a) \}_{m\in\mathbb{N}_0^d}$ constitutes a complete orthonormal set in $\mathcal{L}_2(\mathbb{R}^d,e^{-\lambda^2\|x-a\|^2})$, where
\begin{equation}
    H_{m,\lambda}(x-a)=\sqrt{\frac{\lambda^d}{\pi^d\prod_{i=1}^d(2m_i)!!}}
    \phi_{m_1}(\lambda(x_1-a_1))
    \cdots\phi_{m_d}(\lambda(x_d-a_d))
\end{equation}
and
\begin{equation}
    \phi_j(t)=e^{t^2}\frac{\ud^j}{\ud t^j}e^{-t^2}, ~~j\in\mathbb{N}_0
\end{equation}
are the ordinary Hermite polynomials, where $(2n)!!=2\cdot4\cdot\cdots\cdot2n$; it follows that
\begin{equation}\label{HH.eq.DC}
    H_{m,\lambda}(x-a)\ud x=\frac{1}{\sqrt{(2\lambda^2)^d
    \prod_{i=1}^d(m_i+1)}}\ud H_{m+1,\lambda}(x-a),
\end{equation}
where $m+1=(m_1+1,\cdots,m_d+1)$.

Hence, for any $f\in\mathcal{L}_2(\mathbb{R}^d,e^{-\lambda^2\|x-a\|^2})$, we have the following convergent (in the sense of the norm $\|\cdot\|_{\mathcal{L}_2(\mathbb{R}^d,e^{-\lambda^2\|x-a\|^2})}$) multiple Hermite series
\begin{equation}\label{HH.eq.mHs}
    f(x)=\sum_{m\in\mathbb{N}_0^d}\alpha_m(f)H_{m,\lambda}(x-a),~~x\in\mathbb{R}^d,
\end{equation}
where
\begin{equation*}
    \alpha_m(f)=\int_{\mathbb{R}^d}
    H_{m,\lambda}(x-a)f(x)e^{-\lambda^2\|x-a\|^2}\ud x.
\end{equation*}

As a preliminary, we have the following Lemma:
\begin{lemma}\label{HH.lem.normB}
For any $\lambda\in\mathbb{R}$, $a\in\mathbb{R}^d$ and $f\in\mathcal{H}_{mix}^s(\mathbb{R}^d)$, it follows that
\begin{equation*}
    \left\|e^{\lambda^2\|x-a\|^2}\partial^{r_s}
    \left(f(x)e^{-\lambda^2\|x-a\|^2}\right)
    \right\|^2_{\mathcal{L}_2(\mathbb{R}^d)}\leqslant
    C(s,\lambda,d)\|f\|^2_{\mathcal{H}_{mix}^s(\mathbb{R}^d)},
\end{equation*}
where $r_s=(s,\cdots,s)$ is a $d$-dimensional vector and the constant $C(s,\lambda,d)$ depends on $s,\lambda$ and $d$.
\end{lemma}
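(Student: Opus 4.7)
The plan is to expand $\partial^{r_s}(f e^{-\lambda^2\|x-a\|^2})$ via the multivariate Leibniz rule, recognize the resulting Gaussian derivatives as Hermite polynomials, and then estimate each resulting term against the mixed Hermite norm. Since $e^{-\lambda^2\|x-a\|^2}=\prod_{i=1}^d e^{-\lambda^2(x_i-a_i)^2}$ is a tensor product of one-dimensional Gaussians, Leibniz gives
\begin{equation*}
\partial^{r_s}\bigl(f(x)e^{-\lambda^2\|x-a\|^2}\bigr)=\sum_{0\leqslant r\leqslant r_s}\binom{r_s}{r}\partial^r f(x)\prod_{i=1}^d\partial_{x_i}^{s-r_i}e^{-\lambda^2(x_i-a_i)^2}.
\end{equation*}
The Rodrigues-type identity $\phi_j(t)e^{-t^2}=\frac{\ud^j}{\ud t^j}e^{-t^2}$ recalled in the excerpt yields $\partial_{x_i}^{s-r_i}e^{-\lambda^2(x_i-a_i)^2}=\lambda^{s-r_i}\phi_{s-r_i}(\lambda(x_i-a_i))e^{-\lambda^2(x_i-a_i)^2}$, so multiplying through by $e^{\lambda^2\|x-a\|^2}$ cancels all exponentials and leaves a finite linear combination of terms of the form $\partial^r f(x)\prod_{i=1}^d\phi_{s-r_i}(\lambda(x_i-a_i))$ with coefficients depending only on $s,\lambda,d$.

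Next, each $\phi_{s-r_i}(\lambda(x_i-a_i))$ is a polynomial of degree $s-r_i$ in $x_i-a_i$, which I would expand binomially as a polynomial in $x_i$ of the same degree, with coefficients depending on $s,\lambda,a$. After applying the triangle inequality to the $\mathcal{L}_2$ norm of the resulting sum, it suffices to control finitely many quantities of the form $\|x_1^{j_1}\cdots x_d^{j_d}\partial^r f\|_{\mathcal{L}_2(\mathbb{R}^d)}$ with $|r|_\infty\leqslant s$ and $0\leqslant j_i\leqslant s-r_i$. To match the precise exponents appearing in the definition of the mixed Hermite norm, I would invoke the elementary pointwise bound $|x_i|^{2j_i}\leqslant 1+|x_i|^{2(s-r_i)}$ valid for $0\leqslant j_i\leqslant s-r_i$; iterating it coordinate-wise majorizes each such norm by a sum of at most $2^d$ boundary-weighted norms $\|x_1^{\epsilon_1(s-r_1)}\cdots x_d^{\epsilon_d(s-r_d)}\partial^r f\|_{\mathcal{L}_2(\mathbb{R}^d)}$ with $\epsilon_i\in\{0,1\}$, each of which is directly a summand in $\|f\|_{\mathcal{H}_{mix}^s(\mathbb{R}^d)}$.

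Collecting all combinatorial, binomial, Hermite-polynomial, and $\lambda$- and $a$-dependent prefactors into a single constant then yields the asserted bound. The main obstacle is the centering mismatch: the Gaussian weight is centered at $a$, whereas the mixed Hermite norm uses pure powers of $x_i$. This is what forces the binomial expansion in the second step and is where all of the combinatorial book-keeping concentrates; one must also verify that the $a$-dependence introduced there can legitimately be absorbed into a constant $C(s,\lambda,d)$ (this is fine once $a$ is regarded as fixed, though strictly speaking the constant will depend on $a$). The remaining steps are routine Leibniz and triangle-inequality manipulations.
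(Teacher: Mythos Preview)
Your overall strategy---Leibniz expansion, Rodrigues identity for the Gaussian derivatives, cancellation of the exponentials---is exactly the paper's approach; the paper's own proof is the single sentence that the expression ``is a linear combination of $\{x_1^{s-r_1}\cdots x_d^{s-r_d}\partial^r f(x)\}_{0\leqslant|r|_\infty\leqslant s}$,'' so you have essentially written out what the paper leaves implicit. Your observation that the constant must in fact depend on $a$ is also correct and is glossed over in the paper.

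There is, however, a genuine gap in your last step. In the norm $\|f\|_{\mathcal{H}_{mix}^s}$ the weight exponent in the $i$-th slot is \emph{rigidly} $s-r_i$, tied to the same $r$ that appears in $\partial^r$. Your ``boundary-weighted'' quantities $\|x_1^{\epsilon_1(s-r_1)}\cdots x_d^{\epsilon_d(s-r_d)}\partial^r f\|_{\mathcal{L}_2}$ with $\epsilon_i\in\{0,1\}$ are therefore summands of the norm only when every $\epsilon_i=1$; for any $\epsilon_i=0$ with $r_i<s$ they are not. Concretely, take $d=1$, $s=2$, $r=0$, $\epsilon=0$: your term is $\|f\|_{\mathcal{L}_2}$, while the three summands of the norm are $\|x^2 f\|$, $\|xf'\|$, $\|f''\|$, none of which equals $\|f\|$. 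What is missing is an interpolation/uncertainty argument (for instance via the coordinate-wise harmonic oscillator bound $-\partial_{x_i}^2+x_i^2\geqslant 1$, or equivalently $\|g\|_{\mathcal{L}_2}^2\leqslant 2\|x_ig\|_{\mathcal{L}_2}\|\partial_{x_i}g\|_{\mathcal{L}_2}$ iterated) that controls these intermediate weight/derivative combinations by the extremal ones appearing in the norm. The paper's one-line proof suppresses this same issue, so the gap is inherited rather than introduced by you, but as written your claim ``each of which is directly a summand'' is false and the argument is incomplete without this additional estimate.
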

\begin{proof}
 By noting that
 \begin{equation*}
    e^{\lambda^2\|x-a\|^2}\partial^{r_s}\left(f(x)e^{-\lambda^2\|x-a\|^2}\right)
 \end{equation*}
 is a linear combination of
 \begin{equation*}
    \left\{x_1^{s-r_1}\cdots x_d^{s-r_d}\partial^rf(x)
    \right\}_{0\leqslant|r|_\infty\leqslant s},
 \end{equation*}
 then the desired result holds. \qed
\end{proof}

\subsection{Hermite decomposition of $\mathbb{N}_0^d$}

In order to further discuss the multiple Hermite series \eqref{HH.eq.mHs}, we first consider the following Hermite decomposition of $\mathbb{N}_0^d$.
\begin{definition}
Suppose $d,k\in\mathbb{N}$, $\lambda>0$ and $c\geqslant1$. Let
\begin{align*}
    \Gamma^k(d,c,\lambda):=\left\{m\in\mathbb{N}_0^d:
    k\leqslant(2\lambda^2)^d\prod_{j=1}^d(m_j+c)<k+1\right\}
\end{align*}
and
\begin{equation*}
    \Gamma^K(d,c,\lambda):=\sum_{k=1}^K\Gamma^k(d,c,\lambda),
\end{equation*}
then we have the following Hermite decomposition
\begin{equation}\label{lmfs.eq.decomNd}
    \mathbb{N}_0^d=\sum_{k=1}^\infty\Gamma^k(d,c,\lambda),
\end{equation}
and $k$ is referred to as the Hermite order number of
$m\in\Gamma^k(d,c,\lambda)$.
\end{definition}
\begin{remark}
We will further discuss the constant $c$ later.
\end{remark}

Now consider an upper bound for the cardinality of $\Gamma^K$. First, by noting
\begin{align*}
    \sum_{j=2}^K\frac{1}{j}<\int_1^K\frac{1}{t}\ud t=\log K,
\end{align*}
it follows that
\begin{lemma}
Given $K\in\mathbb{N}$, we have
\begin{align*}
    \bm{h}_K<1+\log K,
\end{align*}
where $\bm{h}_K=\sum_{j=1}^Kj^{-1}$ is the $K$th harmonic number.
\end{lemma}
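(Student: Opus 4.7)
The plan is simply to separate the $j=1$ term in the harmonic sum and then invoke the integral estimate that is displayed immediately before the lemma. Concretely, writing
\[
\bm{h}_K \;=\; 1 + \sum_{j=2}^{K}\frac{1}{j},
\]
it suffices to bound the tail $\sum_{j=2}^{K} 1/j$ by $\log K$, which is precisely the assertion in the unnumbered display preceding the lemma.

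To make that tail bound self-contained, I would compare the $j$-th summand with the integral of $1/t$ over $[j-1,j]$. For each $j\geqslant 2$, the function $1/t$ is strictly decreasing on $[j-1,j]$, so $1/j < 1/t$ on the open interval $(j-1,j)$; integrating gives $1/j < \int_{j-1}^{j} \frac{1}{t}\,\mathrm{d}t$. Summing over $j = 2,\ldots,K$ and using additivity of the integral yields
\[
\sum_{j=2}^{K}\frac{1}{j} \;<\; \int_{1}^{K}\frac{1}{t}\,\mathrm{d}t \;=\; \log K,
\]
valid whenever $K\geqslant 2$. Adding $1$ to both sides produces the claimed inequality $\bm{h}_K < 1 + \log K$.

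There is no genuine technical obstacle here; the only point worth flagging is that the argument (and the preceding display itself) really requires $K\geqslant 2$, since for $K=1$ the tail sum is empty and one gets $1 = \bm{h}_1 = 1 + \log 1$, so the strict inequality degenerates into equality. The lemma is evidently intended in the range $K\geqslant 2$, which is the only regime in which the authors apply it when estimating the cardinality of $\Gamma^{K}(d,c,\lambda)$.
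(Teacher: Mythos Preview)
Your argument is exactly the paper's: it separates the $j=1$ term and bounds $\sum_{j=2}^{K}1/j$ by $\int_{1}^{K}t^{-1}\,\mathrm{d}t=\log K$, which is precisely the display preceding the lemma. Your remark that the strict inequality degenerates to equality at $K=1$ is also correct.
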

\begin{remark}
The asymptotic limit of $\bm{h}_K$ is $\gamma_E+\log K$ as $K\to\infty$, where $\gamma_E\approx0.57722$ is the Euler constant.
\end{remark}

Let $|S|$ be the Lebesgue measure of any given set $S$, then we have:
\begin{theorem}
Given $K\in\mathbb{N}$ and $c\geqslant1$, then
\begin{equation*}
    \left|\Gamma^K(d,c)\right|<K(1+\log K)^{d-1},
\end{equation*}
where $\Gamma^K(d,c)=\{m\in\mathbb{N}_0^d:\prod_{j=1}^d(m_j+c)\leqslant K\}$.
\end{theorem}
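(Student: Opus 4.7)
The plan is to induct on the dimension $d$, peeling off the last coordinate at each step. The factorisation $\prod_{j=1}^{d}(m_j+c) = (m_d+c)\prod_{j=1}^{d-1}(m_j+c)$ rewrites the constraint $\prod_j(m_j+c)\leqslant K$ as $\prod_{j<d}(m_j+c)\leqslant K/(m_d+c)$ for each fixed $m_d$, turning the count into a sum of smaller instances of the same problem.

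The base case $d=1$ is immediate: $\{m_1\in\mathbb{N}_0:m_1+c\leqslant K\}$ has $\lfloor K-c\rfloor+1$ elements, which does not exceed $K$ since $c\geqslant 1$; this matches $K(1+\log K)^{0}$. Before doing the inductive step I would first strengthen the statement to allow the parameter $K$ to range over real numbers $\geqslant 1$, since this is what actually appears at each slice.

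For the inductive step, stratify by the last coordinate,
\begin{equation*}
|\Gamma^K(d,c)| = \sum_{m_d=0}^{\lfloor K-c\rfloor}\bigl|\Gamma^{K/(m_d+c)}(d-1,c)\bigr|,
\end{equation*}
and apply the induction hypothesis to bound each term by $\frac{K}{m_d+c}\bigl(1+\log\frac{K}{m_d+c}\bigr)^{d-2}$. Because $m_d+c\geqslant 1$ forces $\log\!\bigl(K/(m_d+c)\bigr)\leqslant \log K$, the logarithmic factor pulls out cleanly, leaving
\begin{equation*}
|\Gamma^K(d,c)| \leqslant K(1+\log K)^{d-2}\sum_{m_d=0}^{\lfloor K-c\rfloor}\frac{1}{m_d+c}.
\end{equation*}
The termwise comparison $1/(m_d+c)\leqslant 1/(m_d+1)$, valid for $c\geqslant 1$, reduces the inner sum to a partial harmonic number $\bm{h}_{\lfloor K-c\rfloor+1}\leqslant \bm{h}_{\lfloor K\rfloor}$, and the preceding lemma supplies the strict bound $\bm{h}_{\lfloor K\rfloor}<1+\log K$, closing the induction.

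I expect the main hurdle to be bookkeeping rather than conceptual. The induction hypothesis must be carried at a real parameter so that it applies at each $K/(m_d+c)$; the shift $c$ must be tracked uniformly through all the factors $m_j+c$; and the slack in the comparison $1/(m_d+c)\leqslant 1/(m_d+1)$ is precisely what absorbs the $c$-dependence, so that the final constant in the bound remains independent of $c$. A parallel but less transparent route would associate to each lattice point $m$ the unit cube $\prod_j [m_j,m_j+1)$ and bound $\bigl|\bigcup_{m\in\Gamma^K}Q_m\bigr|$ by a nested integral using the Lebesgue-measure convention introduced just before the theorem; the inductive route is preferable here because it recycles the harmonic-number lemma directly.
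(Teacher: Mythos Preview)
Your argument is correct and is essentially the same induction as the paper's: the paper first embeds $\Gamma^K(d,c)$ into $\Lambda^K(d)=\{m\in\mathbb{N}^d:\prod_j m_j\leqslant K\}$ via the shift $n_j=m_j+1$ (which is exactly your inequality $1/(m_d+c)\leqslant 1/(m_d+1)$ applied upfront to every coordinate at once) and then runs the same peel-off-one-coordinate recursion $|\Lambda^K(u+1)|\leqslant\sum_{j=1}^K|\Lambda^{K/j}(u)|\leqslant K\bm{h}_K^{u}$. The only cosmetic difference is that you carry the parameter $c$ through the induction and absorb it at the harmonic sum, whereas the paper disposes of $c$ before inducting; both routes land on $K\bm{h}_K^{d-1}$ and then invoke the lemma $\bm{h}_K<1+\log K$.
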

\begin{proof}
Let $\Lambda^K(u)=\left\{m\in\mathbb{N}^u:m_1\cdots m_u\leqslant K\right\}$, then it follows from $c\geqslant1$ that $\left|\Gamma^K(d,c)\right|< \left|\Lambda^K(d)\right|$, and it holds that $|\Lambda^K(d)|\leqslant K\bm{h}_K^{d-1}=K(1+\log K)^{d-1}$ from
\begin{align*}
    \left|\Lambda^K(1)\right|=K=K\bm{h}_K^0~~\textrm{and}~~
    \left|\Lambda^K(2)\right|=\sum_{j=1}^K\left\lfloor\frac{K}{j}\right\rfloor
    \leqslant K\sum_{j=1}^K\frac{1}{j}=K\bm{h}_K
\end{align*}
and
\begin{align*}
    \left|\Lambda^K(u+1)\right|
    \leqslant\sum_{j=1}^K\frac{\left|\Lambda^K(u)\right|}{j}
    \leqslant K\bm{h}_K^{u-1}\sum_{j=1}^K\frac{1}{j}=K\bm{h}_K^u,
    ~~\forall u\in\mathbb{N},
\end{align*}
where $\lfloor t\rfloor$ is the unique integer satisfying the inequalities
$\lfloor t\rfloor\leqslant t\leqslant\lfloor t\rfloor+1$ for any $t\in\mathbb{R}$. \qed
\end{proof}

Furthermore, we can get an upper bound for both $|\Gamma^K(d,c,\lambda)|$ and $|\Gamma^k(d,c,\lambda)|$.
\begin{corollary}\label{lmfs.cor.s2.KLbound}
Given $K\in\mathbb{N}$ and $\lambda>0$, then for any $k\leqslant K$,
\begin{equation*}
    \left|\Gamma^k(d,c,\lambda)\right|
    \leqslant\left|\Gamma^K(d,c,\lambda)\right|
    \leqslant\frac{K_\lambda+1}{(2\lambda^2)^d}\left(1+\log\left(1+
    \frac{K_\lambda+1}{(2\lambda^2)^d}\right)\right)^{d-1}.
\end{equation*}
\end{corollary}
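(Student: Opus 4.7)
The first inequality is essentially a bookkeeping remark: since $\Gamma^K(d,c,\lambda)=\bigcup_{j=1}^{K}\Gamma^j(d,c,\lambda)$ by \eqref{lmfs.eq.decomNd} (restricted to $k\le K$), any $\Gamma^k$ with $k\le K$ is contained in $\Gamma^K$, so $|\Gamma^k(d,c,\lambda)|\le|\Gamma^K(d,c,\lambda)|$. My plan is to spend only one line on this.

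For the second inequality, the idea is to reduce to the previous theorem by stripping off the factor $(2\lambda^2)^d$. Unpacking the definition, $m\in\Gamma^K(d,c,\lambda)$ means
\begin{equation*}
    1\le (2\lambda^2)^d\prod_{j=1}^d(m_j+c)<K+1,
\end{equation*}
which is equivalent to $\prod_{j=1}^d(m_j+c)<(K+1)/(2\lambda^2)^d$. Set $K^\star=\bigl\lceil(K+1)/(2\lambda^2)^d\bigr\rceil$; then $\Gamma^K(d,c,\lambda)\subseteq\Gamma^{K^\star}(d,c)$, and the preceding theorem gives $|\Gamma^{K^\star}(d,c)|<K^\star(1+\log K^\star)^{d-1}$. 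Finally, I would bound $K^\star\le 1+(K+1)/(2\lambda^2)^d$ and use monotonicity of the logarithm to reach the stated right-hand side (the paper's $K_\lambda$ being, modulo notation, $K$ rescaled by $(2\lambda^2)^{-d}$).

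The only delicate step is controlling the ceiling: one must choose between bounding $K^\star\le(K+1)/(2\lambda^2)^d+1$ before applying the theorem, or bounding it after, and the placement of the extra ``$+1$'' inside versus outside the logarithm determines whether the constants match the display in the statement. This is the one spot where I would be careful, but it is a purely arithmetic adjustment rather than a conceptual obstacle; the rest of the proof is a direct specialization of the previous theorem.
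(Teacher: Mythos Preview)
Your approach is correct and is precisely what the paper intends: the corollary is stated without proof, immediately after the theorem bounding $|\Gamma^K(d,c)|$, and is meant to follow from it by exactly the reduction you describe---the containment $\Gamma^k(d,c,\lambda)\subseteq\Gamma^K(d,c,\lambda)$ for the first inequality, and dividing out $(2\lambda^2)^d$ to land inside some $\Gamma^{K^\star}(d,c)$ for the second. Your caveat about the placement of the ``$+1$'' from the ceiling is well taken (the paper never defines $K_\lambda$, so the exact form of the constant is somewhat loose), but the argument itself matches the implicit one.
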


\subsection{Multiple Hermite series in $\mathcal{H}_{mix}^s(\mathbb{R}^d)$}

According to the Hermite decomposition of $\mathbb{N}_0^d$, in the neighborhood $U(a)$ of a given point $a\in\mathbb{R}^d$, the Hermite series of $f\in\mathcal{H}_{mix}^s(\mathbb{R}^d)\subset\mathcal{L}_2(\mathbb{R}^d)$ can be rewritten as
\begin{equation}\label{HH.eq.mHsD}
    f(x)=\sum_{k=1}^\infty\sum_{m\in\Gamma^k(d,c,\lambda)}\alpha_m(f)
    H_{m,\lambda}(x-a),~~x\in U(a)\subset\mathbb{R}^d.
\end{equation}
This form is very useful for analyzing a function in the mixed Hermite space $\mathcal{H}_{mix}^s(\mathbb{R}^d)$. First, we have the following lemma.
\begin{lemma}\label{HH.lem.decay}
Suppose $s\in\mathbb{N}$ and $f\in\mathcal{H}_{mix}^s(\mathbb{R}^d)$. For any $\lambda\in\mathbb{R}$, $a\in\mathbb{R}^d$ and $m\in\Gamma^k(d,c,\lambda)$,
\begin{equation*}
    \sum_{m\in\Gamma^k(d,c,\lambda)}\alpha_m^2(f)\leqslant
    \frac{C(s,\lambda,d)\|f\|^2_{\mathcal{H}_{mix}^s(\mathbb{R}^d)}}{k^s},
\end{equation*}
where $k\leqslant(2\lambda^2)^d\prod_{j=1}^d(m_j+c)<k+1$ and the constant $c\geqslant1$ depends only on $s$.
\end{lemma}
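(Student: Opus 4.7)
The plan is to express $\alpha_m(f)$ as a rescaled Hermite coefficient of $G(x):=e^{\lambda^2\|x-a\|^2}\partial^{r_s}\!\left[f(x)e^{-\lambda^2\|x-a\|^2}\right]$, which is precisely the object controlled by Lemma \ref{HH.lem.normB}, extract from $m\in\Gamma^k$ a factor of size $k^{s/2}$, and conclude by Bessel's inequality in $\mathcal{L}_2(\mathbb{R}^d,e^{-\lambda^2\|x-a\|^2})$. Iterating the raising relation \eqref{HH.eq.DC} $s$ times in each coordinate (equivalently, applying the one-dimensional Hermite recursion $\phi_n^{(s)}(t)=(-2)^s\frac{n!}{(n-s)!}\phi_{n-s}(t)$ componentwise and tracking the $\mathcal{L}_2$-normalization), I would first establish the identity
\begin{equation*}
\partial^{r_s}H_{m+r_s,\lambda}(x-a)=(-1)^{ds}\sqrt{(2\lambda^2)^{ds}\prod_{i=1}^d\frac{(m_i+s)!}{m_i!}}\;H_{m,\lambda}(x-a),
\end{equation*}
valid for every $m\in\mathbb{N}_0^d$; note that shifting up by $r_s$ first avoids any side condition $m_i\geqslant s$.

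Substituting this into $\alpha_m(f)=\int H_{m,\lambda}(x-a)f(x)e^{-\lambda^2\|x-a\|^2}\ud x$ and integrating by parts $s$ times in each direction transfers all $ds$ derivatives onto the product $f(x)e^{-\lambda^2\|x-a\|^2}$; the boundary contributions vanish because the Gaussian weight dominates both the polynomial factor $H_{m+r_s,\lambda}$ and the polynomial-type growth of $f$ inherited from $f\in\mathcal{H}_{mix}^s(\mathbb{R}^d)$ (with a standard density argument through a Schwartz core if needed). Rewriting $\partial^{r_s}[f e^{-\lambda^2\|x-a\|^2}]=G(x)\,e^{-\lambda^2\|x-a\|^2}$, the output is precisely
\begin{equation*}
\alpha_m(f)=\frac{\alpha_{m+r_s}(G)}{\sqrt{(2\lambda^2)^{ds}\prod_{i=1}^d\frac{(m_i+s)!}{m_i!}}}.
\end{equation*}

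To extract the $k^{-s}$ decay, I choose $c=1$ (which depends only trivially on $s$) and use the elementary bound $(m_i+1)(m_i+2)\cdots(m_i+s)\geqslant(m_i+1)^s$: for $m\in\Gamma^k(d,1,\lambda)$ this gives
\begin{equation*}
(2\lambda^2)^{ds}\prod_{i=1}^d\frac{(m_i+s)!}{m_i!}\geqslant\Big((2\lambda^2)^d\prod_{i=1}^d(m_i+1)\Big)^s\geqslant k^s,
\end{equation*}
so $|\alpha_m(f)|^2\leqslant|\alpha_{m+r_s}(G)|^2/k^s$. Since the shifted indices $\{m+r_s:m\in\Gamma^k\}$ are distinct elements of $\mathbb{N}_0^d$, Bessel's inequality for the complete orthonormal system $\{H_{m',\lambda}(x-a)\}_{m'\in\mathbb{N}_0^d}$ in $\mathcal{L}_2(\mathbb{R}^d,e^{-\lambda^2\|x-a\|^2})$ yields
\begin{equation*}
\sum_{m\in\Gamma^k(d,1,\lambda)}|\alpha_m(f)|^2\leqslant\frac{1}{k^s}\sum_{m'\in\mathbb{N}_0^d}|\alpha_{m'}(G)|^2=\frac{\|G\|_{\mathcal{L}_2(\mathbb{R}^d,e^{-\lambda^2\|x-a\|^2})}^2}{k^s}\leqslant\frac{\|G\|_{\mathcal{L}_2(\mathbb{R}^d)}^2}{k^s},
\end{equation*}
where the last step uses $e^{-\lambda^2\|x-a\|^2}\leqslant1$. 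Lemma \ref{HH.lem.normB} then dominates $\|G\|_{\mathcal{L}_2(\mathbb{R}^d)}^2$ by $C(s,\lambda,d)\|f\|_{\mathcal{H}_{mix}^s(\mathbb{R}^d)}^2$, which is exactly the claim.

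The hard part, as is typical for weighted Hermite estimates, is the first two steps: nailing down the precise normalizing constant in the raising identity and then verifying that the $s$-fold integration by parts reproduces $\alpha_{m+r_s}(G)$ exactly, with no stray factor of $\lambda$, $2$, or $\pi$. Once that explicit expression is in hand, the product lower bound in Step 3 and the Bessel closure in Step 4 are routine bookkeeping.
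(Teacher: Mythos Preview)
Your proposal is correct and follows essentially the same route as the paper's proof: integrate by parts $s$ times in each coordinate via the raising relation to write $\alpha_m(f)=\big[(2\lambda^2)^{ds}(m)^{(s)}\big]^{-1/2}\alpha_{m+r_s}(G)$ with $G=e^{\lambda^2\|x-a\|^2}\partial^{r_s}\!\big(fe^{-\lambda^2\|x-a\|^2}\big)$, bound $(2\lambda^2)^{ds}(m)^{(s)}\geqslant k^s$ on $\Gamma^k$, apply Bessel in the weighted $\mathcal{L}_2$ space, and finish with Lemma~\ref{HH.lem.normB}. The only cosmetic differences are that you make the raising identity and the choice $c=1$ (with the bound $(m_i+1)\cdots(m_i+s)\geqslant(m_i+1)^s$) explicit, whereas the paper simply asserts the existence of an admissible $c\geqslant1$.
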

\begin{proof}
According to integration by parts and \eqref{HH.eq.DC}, there exist $c\geqslant1$ such that
\begin{align*}
    &\sum_{m\in\Gamma^k(d,c,\lambda)}\alpha_m^2(f) \\
    =&\sum_{m\in\Gamma^k(d,c,\lambda)}\left|\int_{\mathbb{R}^d}
    H_{m,\lambda}(x-a)f(x)e^{-\lambda^2\|x-a\|^2}\ud x\right|^2 \\
    =&\sum_{m\in\Gamma^k(d,c,\lambda)}\!\!\frac{1}{(2\lambda^2)^{ds}(m)^{(s)}}\!
    \left|\int_{\mathbb{R}^d}\!\!H_{m+r_s,\lambda}(x\!-\!a)\!\!
    \left[e^{\lambda^2\|x-a\|^2}
    \partial^{r_s}\!\!\left(\!f(x)e^{-\lambda^2\|x-a\|^2}\right)\!\right]
    \!e^{-\lambda^2\|x-a\|^2}\ud x\right|^2 \\
    \leqslant&\frac{1}{k^s}\sum_{m\in\Gamma^k(d,c,\lambda)}
    \alpha_{m+r_s}^2\left(e^{\lambda^2\|x-a\|^2}
    \partial^{r_s}\!\!\left(\!f(x)e^{-\lambda^2\|x-a\|^2}\right)\right) \\
    \leqslant&\frac{\left\|e^{\lambda^2\|x-a\|^2}\partial^{r_s}
    \left(f(x)e^{-\lambda^2\|x-a\|^2}\right)
    \right\|^2_{\mathcal{L}_2(\mathbb{R}^d,e^{-\lambda^2\|x-a\|^2})}}{k^s},
\end{align*}
where $r_s=(s,\cdots,s)$ is a $d$-dimensional vector and
\begin{equation*}
    (m)^{(s)}=\prod_{j=1}^d~\big[(m_j+1)\cdots(m_j+s)\big].
\end{equation*}

From Lemma \ref{HH.lem.normB},
\begin{align*}
    &\left\|e^{\lambda^2\|x-a\|^2}\partial^{r_s}
    \left(f(x)e^{-\lambda^2\|x-a\|^2}\right)
    \right\|^2_{\mathcal{L}_2(\mathbb{R}^d,e^{-\lambda^2\|x-a\|^2})} \\
    \leqslant&\left\|e^{\lambda^2\|x-a\|^2}\partial^{r_s}\left(f(x)
    e^{-\lambda^2\|x-a\|^2}\right)\right\|^2_{\mathcal{L}_2(\mathbb{R}^d)}
    \leqslant C(s,\lambda,d)\|f\|^2_{\mathcal{H}_{mix}^s(\mathbb{R}^d)},
\end{align*}
then we have
\begin{align*}
    \sum_{m\in\Gamma^k(d,c,\lambda)}\alpha_m^2(f)\leqslant
    \frac{C(s,\lambda,d)\|f\|^2_{\mathcal{H}_{mix}^s(\mathbb{R}^d)}}{k^s},
\end{align*}
as claimed. \qed
\end{proof}

\begin{theorem}\label{HH.thm.err}
Suppose $f\in\mathcal{H}_{mix}^s(\mathbb{R}^d)$. If $s\geqslant1$, then for any $\lambda\in\mathbb{R}$ and $a\in\mathbb{R}^d$,
\begin{align*}
    &\left\|f(x)-\sum_{k=1}^K\sum_{m\in\Gamma^k(d,c,\lambda)}
    \alpha_m(f)H_{m,\lambda}(x-a)\right\|^2_{\mathcal{L}_2
    (\mathbb{R}^d,e^{-\lambda^2\|x-a\|^2})}\\
    \leqslant & ~
    \frac{C(s,\lambda,d)}{s-1}~\|f\|^2_{\mathcal{H}_{mix}^s(\mathbb{R}^d)}
    \cdot K^{-(s-1)}.
\end{align*}
\end{theorem}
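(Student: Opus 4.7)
The plan is to reduce the claim to an application of Parseval's identity together with Lemma \ref{HH.lem.decay} and a tail-sum estimate. Since $\{H_{m,\lambda}(x-a)\}_{m\in\mathbb{N}_0^d}$ is a complete orthonormal system in $\mathcal{L}_2(\mathbb{R}^d,e^{-\lambda^2\|x-a\|^2})$, and the Hermite decomposition \eqref{lmfs.eq.decomNd} partitions $\mathbb{N}_0^d$ into the disjoint blocks $\Gamma^k(d,c,\lambda)$, Parseval's identity identifies the truncation error on the left-hand side with the tail sum
\begin{equation*}
    \sum_{k=K+1}^{\infty}\sum_{m\in\Gamma^k(d,c,\lambda)}\alpha_m^2(f).
\end{equation*}
Thus the whole problem reduces to controlling this tail.

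Next I would invoke Lemma \ref{HH.lem.decay}, which (with the same constant $c\geqslant 1$ chosen there) provides the blockwise decay
\begin{equation*}
    \sum_{m\in\Gamma^k(d,c,\lambda)}\alpha_m^2(f)\leqslant
    \frac{C(s,\lambda,d)\,\|f\|^2_{\mathcal{H}_{mix}^s(\mathbb{R}^d)}}{k^s}.
\end{equation*}
Summing this over $k>K$ yields
\begin{equation*}
    \sum_{k=K+1}^{\infty}\frac{1}{k^s}\;\leqslant\;\int_K^{\infty}\frac{\mathrm{d}t}{t^s}\;=\;\frac{K^{-(s-1)}}{s-1},
\end{equation*}
which is exactly where the factor $\tfrac{1}{s-1}$ and the order $K^{-(s-1)}$ in the statement come from. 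Combining the two displays gives the advertised bound.

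The only real subtlety I foresee is making sure the blockwise regularity estimate of Lemma \ref{HH.lem.decay} applies simultaneously to all $k\geqslant 1$ with the same constant $c$ (so that the block decomposition is the one used in the truncated series) and the same $C(s,\lambda,d)$; this is already handled in that lemma's proof, so no additional work beyond citing it should be required. The tail-integral estimate of course needs $s>1$, consistent with the $\tfrac{1}{s-1}$ factor in the statement, so the hypothesis $s\geqslant 1$ should be read as $s\geqslant 2$ (or $s>1$) for the bound to be meaningful; otherwise the proof is a direct assembly of Parseval, Lemma \ref{HH.lem.decay}, and the integral comparison.
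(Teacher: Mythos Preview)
Your proposal is correct and mirrors the paper's own proof essentially step for step: orthonormality (Parseval) to identify the error with the tail sum $\sum_{k>K}\sum_{m\in\Gamma^k}\alpha_m^2(f)$, then Lemma~\ref{HH.lem.decay} for the blockwise $k^{-s}$ decay, then the integral comparison $\sum_{k>K}k^{-s}\leqslant\int_K^\infty t^{-s}\,\mathrm{d}t=K^{-(s-1)}/(s-1)$. Your remark that the bound is only meaningful for $s>1$ is also apt.
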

\begin{proof}
In the sense of the norm $\|\cdot\|_{\mathcal{L}_2(\mathbb{R}^d,e^{-\lambda^2\|x-a\|^2})}$, the multiple Hermite series is convergent, so it follow that
\begin{align*}
    &\left\|f(x)-\sum_{k=1}^K\sum_{m\in\Gamma^k(d,c,\lambda)}
    \alpha_m(f)H_{m,\lambda}(x-a)\right\|^2_{\mathcal{L}_2
    (\mathbb{R}^d,e^{-\lambda^2\|x-a\|^2})} \\
    =&\sum_{k=K+1}^\infty\sum_{m\in\Gamma^k(d,c,\lambda)}
    \alpha_m^2(f)\|H_{m,\lambda}(x-a)\|^2_{\mathcal{L}_2
    (\mathbb{R}^d,e^{-\lambda^2\|x-a\|^2})} \\
    \leqslant&C(s,\lambda,d)\|f\|^2_{\mathcal{H}_{mix}^s(\mathbb{R}^d)}
    \sum_{k=K+1}^\infty\frac{1}{k^s} \\
    \leqslant&C(s,\lambda,d)\|f\|^2_{\mathcal{H}_{mix}^s(\mathbb{R}^d)}
    \sum_{k=K+1}^\infty\int_{k-1}^k\frac{1}{t^s}\ud t \\
    =&C(s,\lambda,d)\|f\|^2_{\mathcal{H}_{mix}^s(\mathbb{R}^d)}
    \int_K^\infty\frac{1}{t^s}\ud t \\
    =&\frac{C(s,\lambda,d)}{s-1}\|f\|^2_{\mathcal{H}_{mix}^s(\mathbb{R}^d)}
    \cdot K^{-(s-1)},
\end{align*}
and the proof is complete. \qed
\end{proof}

\begin{corollary}\label{HH.cor.err}
Suppose $f\in\mathcal{H}_{mix}^s(\mathbb{R}^d)$ and $r\in\mathbb{N}_0^d$. If $s\geqslant|r|_\infty+1$, then for any $\lambda\in\mathbb{R}$ and $a\in\mathbb{R}^d$,
\begin{align*}
    &\left\|\partial^rf(x)-\sum_{k=1}^K
    \sum_{m\in\Gamma^k(d,c,\lambda)}\!\!\!\!\!\alpha_m(f)
    \partial^rH_{m,\lambda}(x-a)
    \right\|^2_{\mathcal{L}_2(\mathbb{R}^d,e^{-\lambda^2\|x-a\|^2})}\\
    \leqslant &~\frac{C(s,\lambda,d)}{s\!-\!|r|_\infty\!-\!1}
    \|f\|^2_{\mathcal{H}_{mix}^s(\mathbb{R}^d)}
    \cdot K^{-(s-|r|_\infty-1)}.
\end{align*}
\end{corollary}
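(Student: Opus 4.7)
The plan is to follow the pattern of Theorem \ref{HH.thm.err}, exploiting one extra structural feature: since $H_{m,\lambda}(x-a)$ is a polynomial (no Gaussian factor is attached to it), differentiation acts as a clean lowering operator on the Hermite basis, so no non-orthogonal cross terms appear.

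First I would establish the one-dimensional lowering identity $\phi_j'(t)=-2j\,\phi_{j-1}(t)$, which follows from direct differentiation of $\phi_j(t)=e^{t^2}(d/dt)^je^{-t^2}$ combined with the classical three-term Hermite recurrence. Applying it coordinate-wise and matching the normalization constants in the definition of $H_{m,\lambda}$ gives
\begin{equation*}
\partial^r H_{m,\lambda}(x-a)=(-\lambda)^{|r|_1}\sqrt{\prod_{j=1}^d\frac{(2m_j)!!}{(2(m_j-r_j))!!}}\,H_{m-r,\lambda}(x-a)
\end{equation*}
when $m\geqslant r$ componentwise, and $\partial^rH_{m,\lambda}\equiv 0$ otherwise. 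In particular $\{\partial^rH_{m,\lambda}(x-a)\}_{m\geqslant r}$ is an orthogonal system in $\mathcal{L}_2(\mathbb{R}^d,e^{-\lambda^2\|x-a\|^2})$ with
\begin{equation*}
\|\partial^rH_{m,\lambda}(x-a)\|^2_{\mathcal{L}_2(\mathbb{R}^d,e^{-\lambda^2\|x-a\|^2})}=\lambda^{2|r|_1}\prod_{j=1}^d\frac{(2m_j)!!}{(2(m_j-r_j))!!}\leqslant C(r,\lambda,d)\prod_{j=1}^d(m_j+1)^{r_j}.
\end{equation*}

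Next, by this orthogonality, Parseval's identity gives
\begin{equation*}
\left\|\sum_{k=K+1}^\infty\sum_{m\in\Gamma^k(d,c,\lambda)}\alpha_m(f)\partial^rH_{m,\lambda}(x-a)\right\|^2_{\mathcal{L}_2(\mathbb{R}^d,e^{-\lambda^2\|x-a\|^2})}=\sum_{k=K+1}^\infty\sum_{m\in\Gamma^k(d,c,\lambda)}\alpha_m^2(f)\,\|\partial^rH_{m,\lambda}\|^2.
\end{equation*}
For $m\in\Gamma^k(d,c,\lambda)$ with $c\geqslant 1$ one has $\prod_j(m_j+1)^{r_j}\leqslant(\prod_j(m_j+c))^{|r|_\infty}\leqslant C(\lambda,d)\,k^{|r|_\infty}$, so inserting the norm bound above and then applying Lemma \ref{HH.lem.decay} to each inner sum reduces the right-hand side to a constant multiple of $\|f\|^2_{\mathcal{H}_{mix}^s(\mathbb{R}^d)}\sum_{k>K}k^{-(s-|r|_\infty)}$. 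The same integral comparison used in Theorem \ref{HH.thm.err} then produces the desired bound $\frac{C(s,\lambda,d)}{s-|r|_\infty-1}\|f\|^2_{\mathcal{H}_{mix}^s(\mathbb{R}^d)}K^{-(s-|r|_\infty-1)}$.

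The principal obstacle is the first step: deriving the lowering formula for $\partial^rH_{m,\lambda}$ together with its polynomial-in-$m$ norm bound, with all prefactors tracked cleanly enough to be absorbed into $C(r,\lambda,d)$. Once that formula is secured, the remainder is a direct adaptation of the proof of Theorem \ref{HH.thm.err} with summation exponent $s$ replaced by the sharper $s-|r|_\infty$, which is precisely why the convergence rate drops by $|r|_\infty$ powers of $K$ relative to the function-space estimate.
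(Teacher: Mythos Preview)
Your proposal is correct and follows essentially the same route as the paper: the paper likewise computes $\|\partial^rH_{m,\lambda}(x-a)\|^2_{\mathcal{L}_2(\mathbb{R}^d,e^{-\lambda^2\|x-a\|^2})}=(2\lambda^2)^{d|r|}(m)_{(r)}\leqslant k^{|r|_\infty}$ for $m\in\Gamma^k(d,c,\lambda)$ (choosing $c$ appropriately), combines this with Lemma~\ref{HH.lem.decay} to obtain the block bound $C(s,\lambda,d)\|f\|^2_{\mathcal{H}_{mix}^s}/k^{s-|r|_\infty}$, and then invokes the integral-comparison argument from Theorem~\ref{HH.thm.err}. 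Your explicit derivation of the lowering identity and the resulting orthogonality of $\{\partial^rH_{m,\lambda}\}_{m\geqslant r}$ makes rigorous a step the paper leaves implicit, but the overall strategy is the same.
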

\begin{proof}
According to \eqref{HH.eq.DC}, there exist $c\geqslant1$ such that
\begin{align*}
    \|\partial^rH_{m,\lambda}(x-a)\|^2_{\mathcal{L}_2
    (\mathbb{R}^d,e^{-\lambda^2\|x-a\|^2})}
    =&(2\lambda^2)^{d|r|}(m)_{(r)}\leqslant k^{|r|_\infty},
\end{align*}
where
\begin{equation*}
    (m)_{(r)}=\prod_{j=1}^dm_j(m_j-1)\cdots(m_j-r_j);
\end{equation*}
then
\begin{equation*}
    \left\|\sum_{m\in\Gamma^k(d,c,\lambda)}\alpha_m(f)
    \partial^rH_{m,\lambda}(x-a)\right\|^2
    _{\mathcal{L}_2(\mathbb{R}^d,e^{-\lambda^2\|x-a\|^2})}
    \leqslant\frac{C(s,\lambda,d)\|f\|^2_{\mathcal{H}_{mix}^s(\mathbb{R}^d)}}
    {k^{s-|r|_\infty}}.
\end{equation*}
Hence, from the proof of Theorem \ref{HH.thm.err}, the desired result holds. \qed
\end{proof}

\subsection{The fundamental conjecture of HDMR in $\mathcal{H}_{mix}^s(\mathbb{R}^d)$}

\begin{definition}
Given a nonempty set $\{j_1,\cdots,j_u\}\subset\{1,\cdots,d\}$, then
\begin{equation*}
    \Gamma_{j_1,\cdots,j_u}^k(d,c,\lambda)
    :=\{m\in\Gamma^k(d,c,\lambda): m_l>0~\forall
    l\in\{j_1,\cdots,j_u\}~\textrm{and}~m_l=0
    ~\forall l\notin\{j_1,\cdots,j_u\}\},
\end{equation*}
where $l\in\{1,\cdots,d\}$.
\end{definition}

Using these sets, $\Gamma^k(d,c,\lambda)$ can be decomposed into the following form
\begin{align}\label{HH.eq.decomZd}
    \Gamma^k(d,c,\lambda)=\Gamma_0^k(d,c,\lambda)+\!\!\sum_{j}\Gamma_j^k(d,c,\lambda)
    +\!\!\!\sum_{j_1<j_2}\!\!\Gamma_{j_1,j_2}^k(d,c,\lambda)+\cdots+
    \Gamma_{j_1\!,\cdots\!,j_d}^k(d,c,\lambda)
\end{align}
and we refer to this as a HDMR decomposition of $\Gamma^k(d,c,\lambda)$; then for any $f\in\mathcal{L}_2(\mathbb{R}^d,e^{-\lambda^2\|x-a\|^2})$, the multiple Hermite series \eqref{HH.eq.mHsD} can be rewritten as
\begin{equation}\label{HH.eq.mHHsD}
\begin{split}
    f(x)=\sum_{k=1}^\infty&\left(\sum_{m\in\Gamma_0^k(d,c,\lambda)}\alpha_m(f)
    H_{m,\lambda}(x-a)+\sum_{j}\sum_{m\in\Gamma_j^k(d,c,\lambda)}
    \alpha_m(f)H_{m,\lambda}(x-a)
    \right. \\
    &\left.+\!\!\!\sum_{j_1<j_2}\sum_{m\in\Gamma_{j_1,j_2}^k(d,c,\lambda)}
    \!\!\!\!\!\!\!\alpha_m(f)H_{m,\lambda}(x-a)\!+\cdots \! +
    \!\!\!\!\!\!\!\sum_{m\in\Gamma_{j_1\!,\cdots \!,j_d}^k(d,c,\lambda)}
    \!\!\!\!\!\!\!\!\alpha_m(f)H_{m,\lambda}(x-a)\!\right),
\end{split}
\end{equation}
which is referred to this as a Hermite-HDMR decomposition of $f$ from $\mathcal{L}_2(\mathbb{R}^d,e^{-\lambda^2\|x-a\|^2})$.
\begin{theorem}\label{HH.thm.HDMR}
Suppose $f\in\mathcal{H}_{mix}^s(\mathbb{R}^d)$, $r\in\mathbb{N}_0^d$ and $s\geqslant|r|_\infty+1$. For a fixed $K\in\mathbb{N}$, if $u=u^*$ is the smallest integer that satisfies
\begin{equation*}
    (1+c)^{u+1}>\frac{K+1}{(2\lambda^2)^d}, ~~1\leqslant u\leqslant d,
\end{equation*}
then
\begin{equation}\label{HH.eq.HDMR}
    \|\partial^rf(x)-\partial^r\mathcal{S}_{u^*}f(x)\|^2
    _{\mathcal{L}_2(\mathbb{R}^d,e^{-\lambda^2\|x-a\|^2})}
    \leqslant\frac{C(s,\lambda,d)}{s-|r|_\infty-1}
    \|f\|^2_{\mathcal{H}_{mix}^s(\mathbb{R}^d,
    \lambda)}\cdot K^{-(s-|r|_\infty-1)}.
\end{equation}
\end{theorem}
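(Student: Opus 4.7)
The plan is to reduce the statement to Corollary \ref{HH.cor.err} by showing that the HDMR-truncation at interaction order $u^{*}$ does not discard any term whose Hermite order number satisfies $k\le K$. First, I would unpack $\mathcal{S}_{u^{*}}f$ as
\[
\mathcal{S}_{u^{*}}f(x)=\sum_{k=1}^{K}\sum_{u=0}^{u^{*}}\sum_{j_1<\cdots<j_u}\sum_{m\in\Gamma_{j_1,\ldots,j_u}^{k}(d,c,\lambda)}\alpha_m(f)H_{m,\lambda}(x-a),
\]
so that, by the HDMR decomposition \eqref{HH.eq.decomZd}, the full $K$-truncation $\sum_{k=1}^{K}\sum_{m\in\Gamma^{k}(d,c,\lambda)}\alpha_m(f)H_{m,\lambda}(x-a)$ appearing in Corollary \ref{HH.cor.err} is recovered when $u^{*}=d$.

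The key step is to establish that whenever $u\ge u^{*}+1$ and $m\in\Gamma_{j_1,\ldots,j_u}^{k}(d,c,\lambda)$, necessarily $k>K$. The structure of $\Gamma_{j_1,\ldots,j_u}^{k}$ is decisive here: the coordinates $m_l$ with $l\notin\{j_1,\ldots,j_u\}$ vanish, while the remaining $u$ coordinates are at least $1$. Combined with $c\ge 1$, this gives
\[
\prod_{j=1}^{d}(m_j+c)=c^{d-u}\prod_{l\in\{j_1,\ldots,j_u\}}(m_l+c)\ge c^{d-u}(1+c)^{u}\ge(1+c)^{u}\ge(1+c)^{u^{*}+1}>\frac{K+1}{(2\lambda^{2})^{d}},
\]
by the defining property of $u^{*}$. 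Hence $(2\lambda^{2})^{d}\prod_{j=1}^{d}(m_j+c)>K+1$, which together with the membership relation $k\le(2\lambda^{2})^{d}\prod_{j=1}^{d}(m_j+c)<k+1$ forces $k\ge K+1$.

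Consequently, every index block $\Gamma_{j_1,\ldots,j_u}^{k}(d,c,\lambda)$ with $u>u^{*}$ and $k\le K$ is empty, so $\mathcal{S}_{u^{*}}f$ coincides exactly with the $K$-truncation of the multiple Hermite series around $a$. Differentiating term by term (which is justified in $\mathcal{L}_{2}(\mathbb{R}^{d},e^{-\lambda^{2}\|x-a\|^{2}})$ by the mixed-regularity assumption $s\ge|r|_{\infty}+1$ through Corollary \ref{HH.cor.err}) and invoking Corollary \ref{HH.cor.err} directly delivers the asserted bound with the claimed constant. The only non-routine point I anticipate is the interaction-order reduction in the second step; the rest amounts to a bookkeeping identity followed by the quotation of an already-proven error estimate.
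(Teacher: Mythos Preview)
Your combinatorial core---that any $m$ with interaction order $u\ge u^{*}+1$ must satisfy $(2\lambda^{2})^{d}\prod_{j}(m_j+c)\ge (1+c)^{u^{*}+1}>K+1$ and hence has Hermite order $k>K$---is exactly the mechanism the paper relies on (it appears in the step ``$\langle A,\,f-\mathcal{S}_{u^{*}}f\rangle=0$''). So the key idea is right.

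The one point to flag is your reading of $\mathcal{S}_{u^{*}}f$. You unpack it with the outer sum $\sum_{k=1}^{K}$, i.e.\ as a simultaneous $K$-truncation and HDMR truncation, and then deduce that $\mathcal{S}_{u^{*}}f$ literally equals the $K$-truncation $A$. The paper, however, treats $\mathcal{S}_{u^{*}}f$ as the pure HDMR projection over \emph{all} $k$ (this is visible from its use of $\langle B,\mathcal{S}_{u^{*}}f\rangle\ge 0$, which would be an equality if your definition were in force). Under that reading your ``coincides exactly'' is too strong: one only gets that every basis function discarded by $\mathcal{S}_{u^{*}}$ lies in the tail $B$, i.e.\ the index set of $f-\mathcal{S}_{u^{*}}f$ is contained in $\{m:k_m>K\}$. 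The paper then finishes not by quoting Corollary~\ref{HH.cor.err} directly but by an inner-product calculation: writing $f=A+B$ and using $\langle A,f-\mathcal{S}_{u^{*}}f\rangle=0$ together with $\langle B,\mathcal{S}_{u^{*}}f\rangle\ge 0$ to obtain $\|f-\mathcal{S}_{u^{*}}f\|^{2}\le\|B\|^{2}$, after which the bound on $\|B\|$ from Theorem~\ref{HH.thm.err}/Corollary~\ref{HH.cor.err} closes the argument. Your route is more elementary and, under your definition of $\mathcal{S}_{u^{*}}f$, complete; under the paper's definition it needs only the one-line Pythagorean adjustment just described.
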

\begin{proof}
Denote
\begin{equation*}
    \big\langle p,q\big\rangle=\int_{\mathbb{R}^d}p(x)q(x)
    e^{-\lambda^2\|x-a\|^2}\ud x,
\end{equation*}
where $p,q\in\mathcal{L}_2(\mathbb{R}^d,e^{-\lambda^2\|x-a\|^2})$; let
\begin{align*}
    A(x)=&\sum_{k=1}^K\sum_{m\in\Gamma^k(d,c,\lambda)}
    \alpha_m(f)H_{m,\lambda}(x-a),\\
    B(x)=&\sum_{k=K+1}^\infty\sum_{m\in\Gamma^k(d,c,\lambda)}
    \alpha_m(f)H_{m,\lambda}(x-a),
\end{align*}
that is,
\begin{equation*}
    f(x)=A(x)+B(x)
\end{equation*}
and
\begin{equation*}
    \big\langle f,A\big\rangle=\big\langle A,A\big\rangle,~~
    \big\langle f,B\big\rangle=\big\langle B,B\big\rangle.
\end{equation*}

Noting that
\begin{equation*}
    \big\langle \mathcal{S}_{u^*}f,f-\mathcal{S}_{u^*}f\big\rangle=0
\end{equation*}
and
\begin{align*}
    \big\langle A,f-\mathcal{S}_{u^*}f\big\rangle=0~~~\textrm{when}~~~
    (1+c)^{u^*}>\frac{k+1}{(2\lambda^2)^d},
\end{align*}
we have
\begin{align*}
    \big\langle f-\mathcal{S}_{u^*}f,f-\mathcal{S}_{u^*}f\big\rangle
    =\big\langle A+B,f-\mathcal{S}_{u^*}f\big\rangle
    =\big\langle B,f\big\rangle-\big\langle B,\mathcal{S}_{u^*}f\big\rangle
    =\big\langle B,B\big\rangle-\big\langle B,\mathcal{S}_{u^*}f\big\rangle,
\end{align*}
since $\big\langle B,\mathcal{S}_{u^*}f\big\rangle\geqslant0$,
\begin{equation*}
    \|f-\mathcal{S}_{u^*}f\|_{\mathcal{L}_2(\mathbb{R}^d,e^{-\lambda^2\|x-a\|^2})}
    \leqslant\|B\|_{\mathcal{L}_2(\mathbb{R}^d,e^{-\lambda^2\|x-a\|^2})},
\end{equation*}
and then \eqref{HH.eq.HDMR} holds. \qed
\end{proof}

This proof also reveals that the Hermite-HDMR approximation up to order $K$, that is,
\begin{equation}\label{HH.eq.app}
    \sum_{k=1}^K\sum_{m\in\Gamma^k(d,c,\lambda)}\alpha_m(f)H_{m,\lambda}(x-a)
\end{equation}
is a truncated HDMR expansion of not more than order $u$ if
\begin{equation*}
    (1+c)^{u+1}>\frac{K+1}{(2\lambda^2)^d}, ~~1\leqslant u\leqslant d.
\end{equation*}
Hence, the degrees of freedom of \eqref{HH.eq.app} is
\begin{equation*}
    \bm{O}\left(\frac{K+1}{(2\lambda^2)^d}\left(1+\log\left(1+
    \frac{K+1}{(2\lambda^2)^d}\right)\right)^{u-1}\right),
\end{equation*}
which is the main reason that the Hermite-HDMR approximation can be used for high dimensional problems.

\section{The meshless Hermite-HDMR FD method}
\label{HH.s3}

\subsection{Hermite-HDMR FD method}

Let $\boldsymbol{\chi}=\{\chi_i\}_{i=1}^N\subset\Omega$ be the interior node set with the size $N$ and $\boldsymbol{\chi'}=\{\chi'_i\}_{i=1}^{N'}\subset\Omega$ be the total node set with size $N'$ (including the boundary nodes). According to
\begin{align}\label{HH.eq.rG}
    \Gamma^K(d,c):=\left\{m\in\mathbb{N}_0^d:
    \prod_{j=1}^d(m_j+c)<K\right\},
\end{align}
for functions from $\mathcal{H}_{mix}^s(\mathbb{R}^d)$, we redefine the Hermite-HDMR smoothing up to order $K$ at a neighborhood of $\chi_i$, i.e.,
\begin{equation*}
    S_K(x,\chi_i,\beta)=\sum_{m\in\Gamma^K(d,c)}\frac{\alpha_m}{k_m^\beta}
    H_{m,\lambda}(x-\chi_i),
\end{equation*}
where $\beta\geqslant0$ is the smoothing factor and
\begin{equation*}
    k_m=\prod_{j=1}^d(m_j+c).
\end{equation*}
Let $\{H^{(j)}(x)\}_{j=1}^M$ denote the bases $\{H_{m,\lambda}( x)\}_{m\in\Gamma^K(d,c)}$ and $\{k_{(j)}\}_{j=1}^M$ denote $\{k_m\}_{m\in\Gamma^K(d,c)}$, i.e., $M=|\Gamma^K(d,c)|$, then for a given reference node $\chi_i\in\boldsymbol{\chi}$, the solution $u(x)$ of the problem \eqref{HH.eq.mainE} can be approximately represented as
\begin{equation}\label{HH.eq.LocalA}
    S_M(x,\chi_i,\beta)=\sum_{j=1}^M\frac{H^{(j)}(x-\chi_i)}
    {k_{(j)}^\beta}\alpha^{(j)},
\end{equation}
then the interpolation equations at the reference node $\chi_i$ can be written as
\begin{align*}
    \left(
      \begin{array}{cccc}
        \frac{H^{(1)}(\chi_1-\chi_i)}{k_{(1)}^\beta} & \cdots & \frac{H^{(M)}(\chi_1-\chi_i)}{k_{(M)}^\beta} \\
        \vdots & \cdots & \vdots \\
        \frac{H^{(1)}(\chi_{N'}-\chi_i)}{k_{(1)}^\beta} & \cdots & \frac{H^{(M)}(\chi_{N'}-\chi_i)}{k_{(M)}^\beta} \\
      \end{array}
    \right)
    \left(
      \begin{array}{c}
        \alpha_1 \\
        \vdots \\
        \alpha_M \\
      \end{array}
    \right)
    =\left(
       \begin{array}{c}
         u(\chi_1) \\
         \vdots \\
         u(\chi_{N'}) \\
       \end{array}
     \right),
\end{align*}
which in matrix notation becomes
\begin{equation}\label{HH.eq.IE}
    \bm{H}_i\bm{\alpha}_i=\bm{U'}.
\end{equation}
Suppose $\bm{\hat{\alpha}}_i=\bm{C}_i\bm{U'}$ is the least square solution of \eqref{HH.eq.IE} under the weighted $2$-norm
\begin{equation*}
    \|\bm{\alpha}\|_{2,w}=\bm{\alpha}^\mathrm{T}\bm{W}\bm{\alpha},
\end{equation*}
where $\bm{W}$ is the diagonal matrix constructed from the constant weights, i.e.,
\begin{equation*}
    \bm{W}=\textrm{diag}\left(e^{-\lambda^2\|\chi_1\|_2^2},\cdots,
    e^{-\lambda^2\|\chi_{N'}\|_2^2}\right).
\end{equation*}
Thus, at the reference node $\chi_i$, the Laplace operator $\triangle u(\chi_i)$ can be approximated by
\begin{equation*}
    \sum_{j=1}^M\frac{\triangle H^{(j)}(0)}{k_{(j)}^\beta}\cdot\alpha_i^{(j)}
    =\bm{D}_i^\mathrm{T}\bm{\alpha}_i=\bm{D}_i^\mathrm{T}\bm{C}_i\bm{U'}
\end{equation*}
and on the basis of the interior node set $\boldsymbol{\chi}= \{\chi_i\}_{i=1}^N\subset\Omega$, the equation $\frac{1}{2}\triangle u(x)=\varphi(x)$ is discretized as
\begin{equation*}
    \left(
      \begin{array}{c}
        \bm{D}_1^\mathrm{T}\bm{C}_1 \\
        \vdots \\
        \bm{D}_N^\mathrm{T}\bm{C}_N \\
      \end{array}
    \right)\bm{U'}=
    \left(
      \begin{array}{c}
        \varphi(\chi_1) \\
        \vdots \\
        \varphi(\chi_N) \\
      \end{array}
    \right),
\end{equation*}
or in matrix notation,
\begin{equation*}
    \bm{A'}\bm{U'}=\Phi.
\end{equation*}
By imposing the boundary conditions ($u(\chi_s)=v(\chi_s)$ is known if $\chi_s\in\boldsymbol{\chi'}-\boldsymbol{\chi}$), we have the final difference system
\begin{equation}\label{HH.eq.DE}
    \bm{A}\bm{U}=\Phi'.
\end{equation}
Due to the fast decay of the Gaussian weighted function, the difference matrix $\bm{A}$ is very sparse, so this difference system can be solved efficiently by iteration methods, such as the biconjugate gradient stabilized method (BCGS) \cite{VanderVorstHA1992A_BI-CGSTAB,GutknechtMH1993A_BICGSTAB} or the successive over-relaxation (SOR) method.

\subsection{The behavior of the Hermite-HDMR FD approximation}

Since the fast decay of the weighted function $e^{-\lambda^2\|x-\chi_i\|^2}$, those nodes located outside the $d$-ball $B(\chi_i,\rho)=\{x\in\mathbb{R}^d: \|x-\chi_i\|_2\leqslant\rho\}$ contribute very little to the reference node $\chi_i$, where the radius $\rho$ is inversely related to $\lambda$, i.e.,
\begin{equation*}
    \rho=\frac{\kappa}{\lambda},
\end{equation*}
and throughout this paper, we use $\kappa=2.628$. According to \eqref{HH.eq.LocalA}, the number of the nodes located in the $d$-ball $B(\chi_i,\rho)$ should be at least equal to $M$, hence let
\begin{equation*}
    \frac{\pi^{\frac{d}{2}}}{\Gamma(\frac{d}{2}+1)}
    \left(\frac{\kappa}{\lambda}\right)^d
    =\frac{\theta M}{N}|\Omega|,
\end{equation*}
where the constant $\theta>1$, $\Gamma(n)$ is the gamma function (i.e., $\Gamma(n)=(n-1)!$ when $n$ is a positive integer), $|\Omega|$ is the Lebesgue measure of $\Omega$, and $N$ is the size of $\boldsymbol{\chi}$; then
\begin{equation}\label{HH.eq.L}
    \left(\frac{1}{\lambda}\right)^d=\left(\frac{1}{\kappa\sqrt{\pi}}\right)^d
    \frac{\theta M\Gamma(\frac{d}{2}+1)|\Omega|}{N},
\end{equation}
and together with Corollary \ref{HH.cor.err} we have the following result:
\begin{theorem}\label{HH.thm.NLerr}
Under the conditions of \eqref{HH.eq.LocalA}, if $u(x)\in \mathcal{H}_{mix}^s(\mathbb{R}^d)$ and $s\geqslant3$, then
\begin{align*}
    &\lim_{\beta\to0}\left\|\triangle u(x)-\triangle S_M(x,\chi_i,\beta)
    \right\|_{\mathcal{L}_2(\mathbb{R}^d,e^{-\lambda^2\|x-\chi_i\|^2})}\\
    \leqslant &~\sqrt{\frac{C(s,\lambda,d)}{s\!-\!3}}~
    \|u\|_{\mathcal{H}_{mix}^s(\mathbb{R}^d)}
    \left[C''\frac{M}{N\sqrt{K}}\right]^{s-3},
\end{align*}
where $S_M(x,\chi_i,\beta)$ is defined by \eqref{HH.eq.LocalA} and
\begin{equation*}
    M=|\Gamma^K(d,c)|~~\textrm{and}~~C''=
    \theta\left(\frac{1}{\kappa\sqrt{2\pi}}\right)^d
    \Gamma\left(\frac{d}{2}+1\right)|\Omega|.
\end{equation*}
\end{theorem}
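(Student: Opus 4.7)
The strategy is to deduce the theorem from Corollary \ref{HH.cor.err} by identifying, in the $\beta\to 0$ limit, $S_M(x,\chi_i,\beta)$ with a truncated Hermite expansion of the type treated in Section \ref{HH.s2}, and then converting the resulting $K$-dependent rate into $M,N,K$ via the density relation \eqref{HH.eq.L}.

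First I would pass to the limit: since the sum in \eqref{HH.eq.LocalA} has only $M$ terms and $k_m^{\beta}\to 1$ as $\beta\to 0$ for every fixed $m\in\Gamma^K(d,c)$, the limit of $S_M(x,\chi_i,\beta)$ in the weighted $\mathcal{L}_2$-norm is $\sum_{m\in\Gamma^K(d,c)}\alpha_m H_{m,\lambda}(x-\chi_i)$, and because $\triangle$ acts termwise on a finite sum the same conclusion holds after differentiation. Next I would reconcile the two index systems: the set $\Gamma^K(d,c)=\{m:\prod_j(m_j+c)<K\}$ of \eqref{HH.eq.rG} agrees, up to a low-frequency shell that can be absorbed into $C(s,\lambda,d)$, with $\bigcup_{k=1}^{\tilde K}\Gamma^k(d,c,\lambda)$ for $\tilde K=\lfloor(2\lambda^2)^d K\rfloor$, because $(2\lambda^2)^d\prod_j(m_j+c)<\tilde K+1$ is equivalent (up to rounding) to $\prod_j(m_j+c)<K$. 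Hence the truncation here is exactly the Hermite-decomposition truncation of Theorem \ref{HH.thm.err} at order $\tilde K$, so that Corollary \ref{HH.cor.err} becomes directly applicable.

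Then I would apply Corollary \ref{HH.cor.err} to each pure second-order partial derivative $\partial^2/\partial x_j^2$ (the corresponding multi-indices all satisfy $|r|_\infty=2$, which is precisely why the hypothesis $s\geqslant 3$ is imposed) and sum via the triangle inequality, absorbing the resulting factor of $d$ into $C(s,\lambda,d)$. This yields the bound $\sqrt{C(s,\lambda,d)/(s-3)}\,\|u\|_{\mathcal{H}_{mix}^s(\mathbb{R}^d)}\cdot\tilde K^{-(s-3)/2}$. Finally, a direct computation from \eqref{HH.eq.L} gives $(2\lambda^2)^{-d/2}=(\kappa\sqrt{2\pi})^{-d}\theta\,\Gamma(d/2+1)|\Omega|\cdot M/N=C''M/N$, so substituting $\tilde K=(2\lambda^2)^d K$ yields $\tilde K^{-(s-3)/2}=[(2\lambda^2)^{-d/2}]^{s-3}\,K^{-(s-3)/2}=[C''M/(N\sqrt K)]^{s-3}$, which is the claimed form.

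The main obstacle I anticipate is the second step above: carefully matching the $\lambda$-free truncation set $\Gamma^K(d,c)$ used in the FD construction against the $\lambda$-dependent Hermite decomposition $\{\Gamma^k(d,c,\lambda)\}$ driving the error estimate of Corollary \ref{HH.cor.err}, and verifying that the (finite) boundary discrepancy between the two collections does not spoil the asymptotic rate. Once that identification is clean, the remaining work is routine substitution of \eqref{HH.eq.L}.
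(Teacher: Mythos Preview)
Your proposal is correct and follows exactly the route the paper indicates: the paper gives no explicit proof for Theorem~\ref{HH.thm.NLerr}, presenting it simply as the consequence obtained by combining Corollary~\ref{HH.cor.err} (applied with $|r|_\infty=2$, whence the hypothesis $s\geqslant 3$) with the density relation~\eqref{HH.eq.L}. Your steps---letting $\beta\to 0$ to recover the plain truncated expansion, identifying $\Gamma^K(d,c)$ with $\bigcup_{k\leqslant\tilde K}\Gamma^k(d,c,\lambda)$ for $\tilde K\approx(2\lambda^2)^dK$, invoking Corollary~\ref{HH.cor.err}, and then substituting $(2\lambda^2)^{-d/2}=C''M/N$ from~\eqref{HH.eq.L} to rewrite $\tilde K^{-(s-3)/2}$ as $[C''M/(N\sqrt K)]^{s-3}$---are precisely the intended computation, and the ``obstacle'' you flag (the boundary-shell discrepancy between the two index sets) is a detail the paper silently absorbs into the constant.
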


\section{Numerical examples}
\label{HH.s5}

For the given interior node set $\boldsymbol{\chi}=\{\chi_i\}_{i=1}^N$, let's define the average relative error percentage (AREP) as follows
\begin{equation}\label{HH.eq.mRE}
    \textrm{AREP}(U)=\frac{1}{N}\sum_{i=1}^N
    \left|\frac{U_i-u(\chi_i)}{u(\chi_i)}\right|\cdot100\%,
    ~~~\textrm{where}~~~u(\chi_i)\neq0.
\end{equation}
In the following, we use boxplots to express all the AREPs; and every boxplot will be generated by independently repeating the computation $10$ times with different random node sets.

\subsection{Case $1$: constant inhomogeneity, linear boundary condition}

We consider
\begin{align}\label{HH.eq.NE1}
    \varphi(x)=-1~~\textrm{and}~~v(x)=\sum_{i=1}^{30}x_i,
\end{align}
where $x\in\Omega=\{x\in\mathbb{R}^{30}:\|x\|\leqslant1\}$ and the explicit solution is
\begin{equation*}
    u(x)=\frac{1}{30}(1-\|x\|^2)+\sum_{i=1}^{30}x_i.
\end{equation*}
This problem is defined in a $30$-dimensional unit sphere, the inhomogeneity $\varphi$ is a constant and the boundary condition does not vary significantly and is linear.

We show the AREPs obtained versus node size $N$ in Fig. \ref{HH.fig.ne1}. Employing smoothing clearly improves the accuracy of the results.

\ifthenelse{\equal{\fv}{true}}{
\begin{figure}[!htb]
\centering
\begin{minipage}{1.0\textwidth}
\subfigure{\label{HH.fig.1a1}
\includegraphics[width=0.5\textwidth]{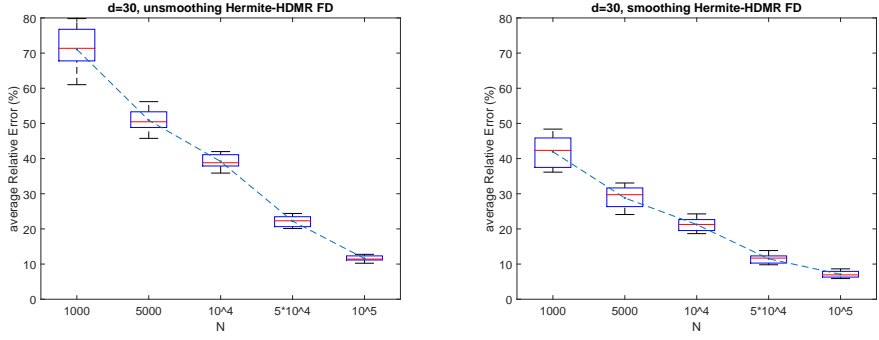}}
\subfigure{\label{HH.fig.1a2}
\includegraphics[width=0.5\textwidth]{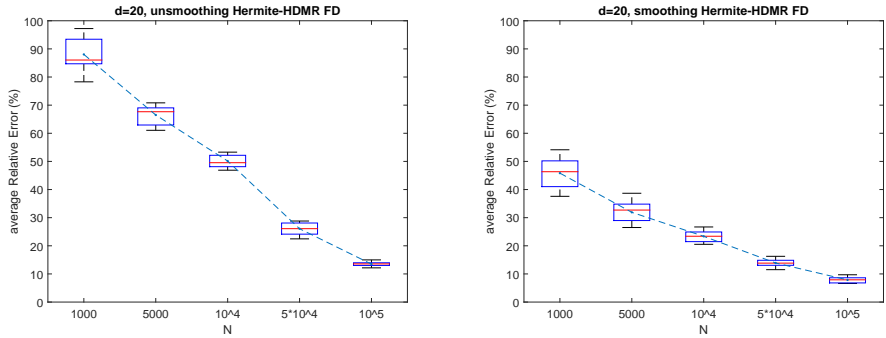}}
\end{minipage}
\caption{Results for test problem \eqref{HH.eq.NE1}}
\label{HH.fig.ne1}
\end{figure}
}

\subsection{Case $2$: quadratic inhomogeneity, quartic boundary condition}

We consider
\begin{align}\label{HH.eq.NE2}
    \varphi(x)=-\sum_{i=1}^{20}x_i^2~~\textrm{and}~~
    v(x)=\frac{1}{6}\sum_{i=1}^{20}x_i^4,
\end{align}
where $\Omega=[-1,1]^{20}$ and the explicit solution $u(x)=v(x)$. Here we make the inhomogeneity $\varphi$ more complex and choose a nonlinear boundary condition, but this problem still has an intrinsic symmetry: $\varphi$ is constant on the sphere $\|x\|=r=\textrm{const}$, and $v$ is constant when $\sum_{i=1}^{20}x_i^4=R=const$, respectively.

We show the AREPs obtained versus node size $N$ in Fig. \ref{HH.fig.ne2}.

\ifthenelse{\equal{\fv}{true}}{
\begin{figure}[!htb]
\centering
\begin{minipage}{1.0\textwidth}
\subfigure{\label{HH.fig.1a1}
\includegraphics[width=0.5\textwidth]{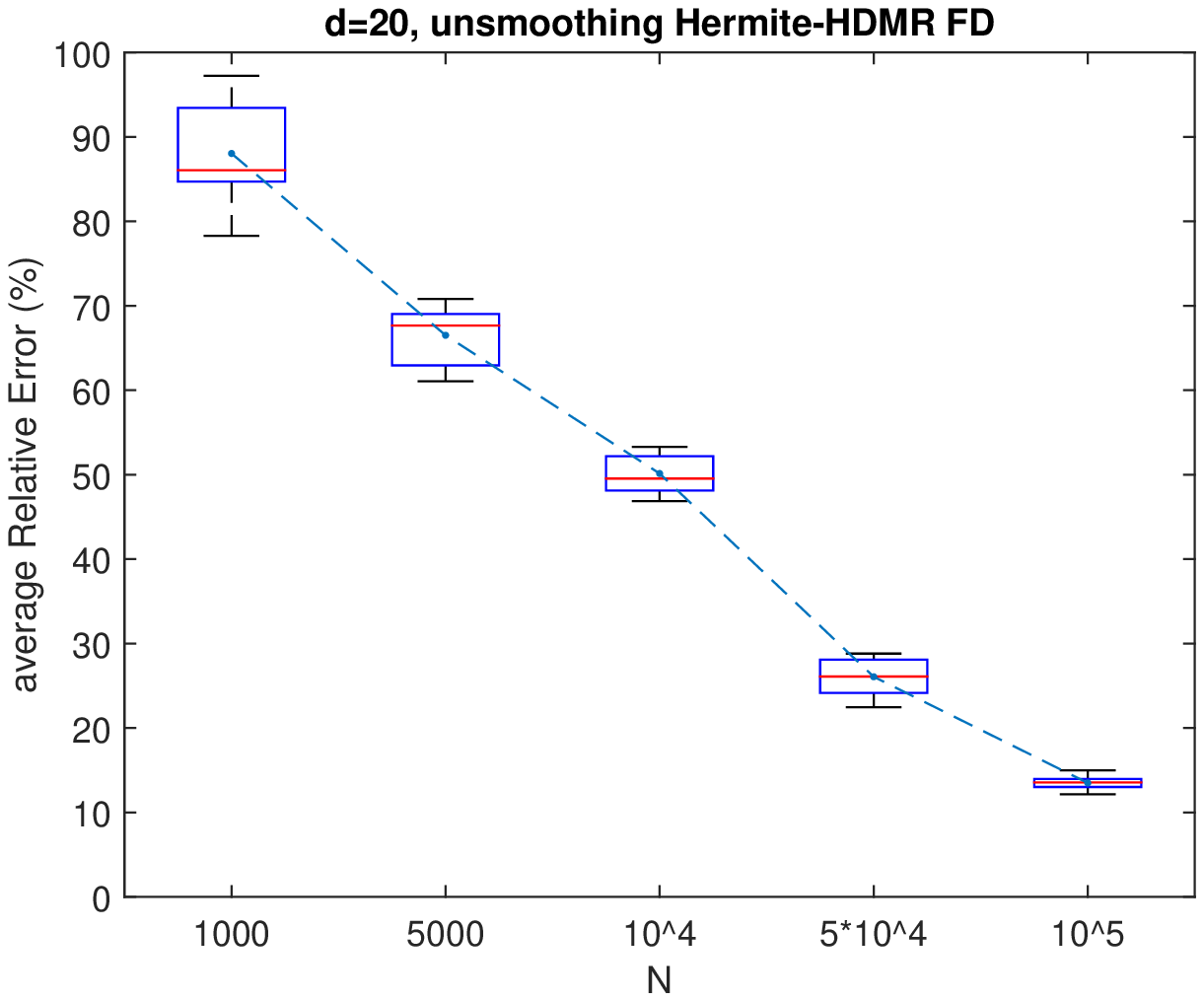}}
\subfigure{\label{HH.fig.1a2}
\includegraphics[width=0.5\textwidth]{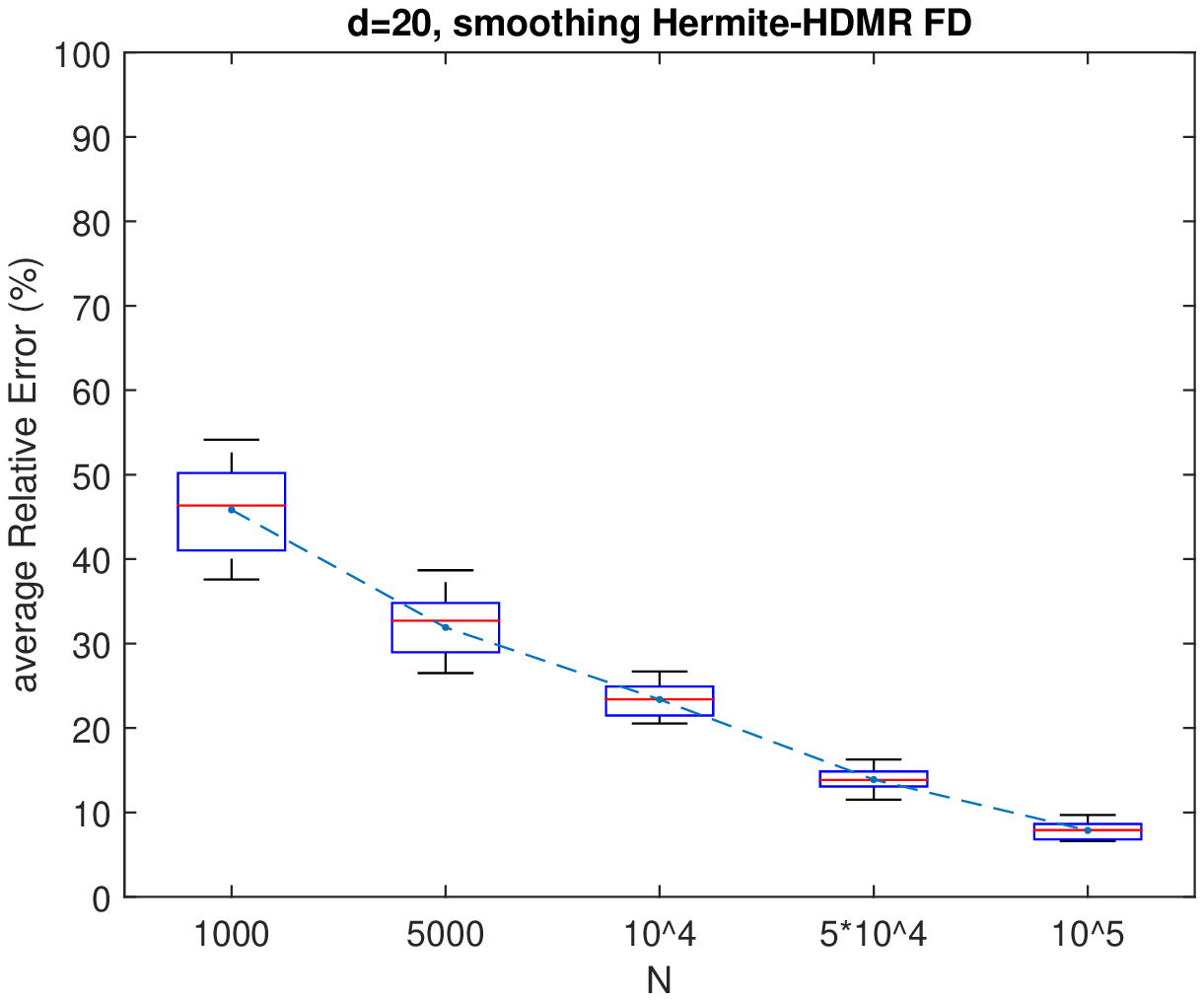}}
\end{minipage}
\caption{Results for test problem \eqref{HH.eq.NE1}}
\label{HH.fig.ne2}
\end{figure}
}

\subsection{Case $3$: transcendental inhomogeneity and boundary condition}

We consider
\begin{equation}\label{HH.eq.NE3}
\begin{split}
    \varphi(x)=&(2\|x\|^2-d)\exp(-\|x\|^2)
    -\frac{d(x_1+\cdots+x_d)}{4(4+(x_1+\cdots+x_d)^2)^2} \\
    v(x)=&\arctan\left(\frac{x_1+\cdots+x_d}{2}\right)+\exp(-\|x\|^2),
\end{split}
\end{equation}
where $\Omega=[-3,3]^d$, then the explicit solution $u(x)=v(x)$.

We show the AREPs obtained versus node size $N$ in Fig. \ref{HH.fig.ne3}.

\ifthenelse{\equal{\fv}{true}}{
\begin{figure}[!htb]
\centering
\begin{minipage}{1.0\textwidth}
\subfigure{\label{HH.fig.3a1}
\includegraphics[width=0.5\textwidth]{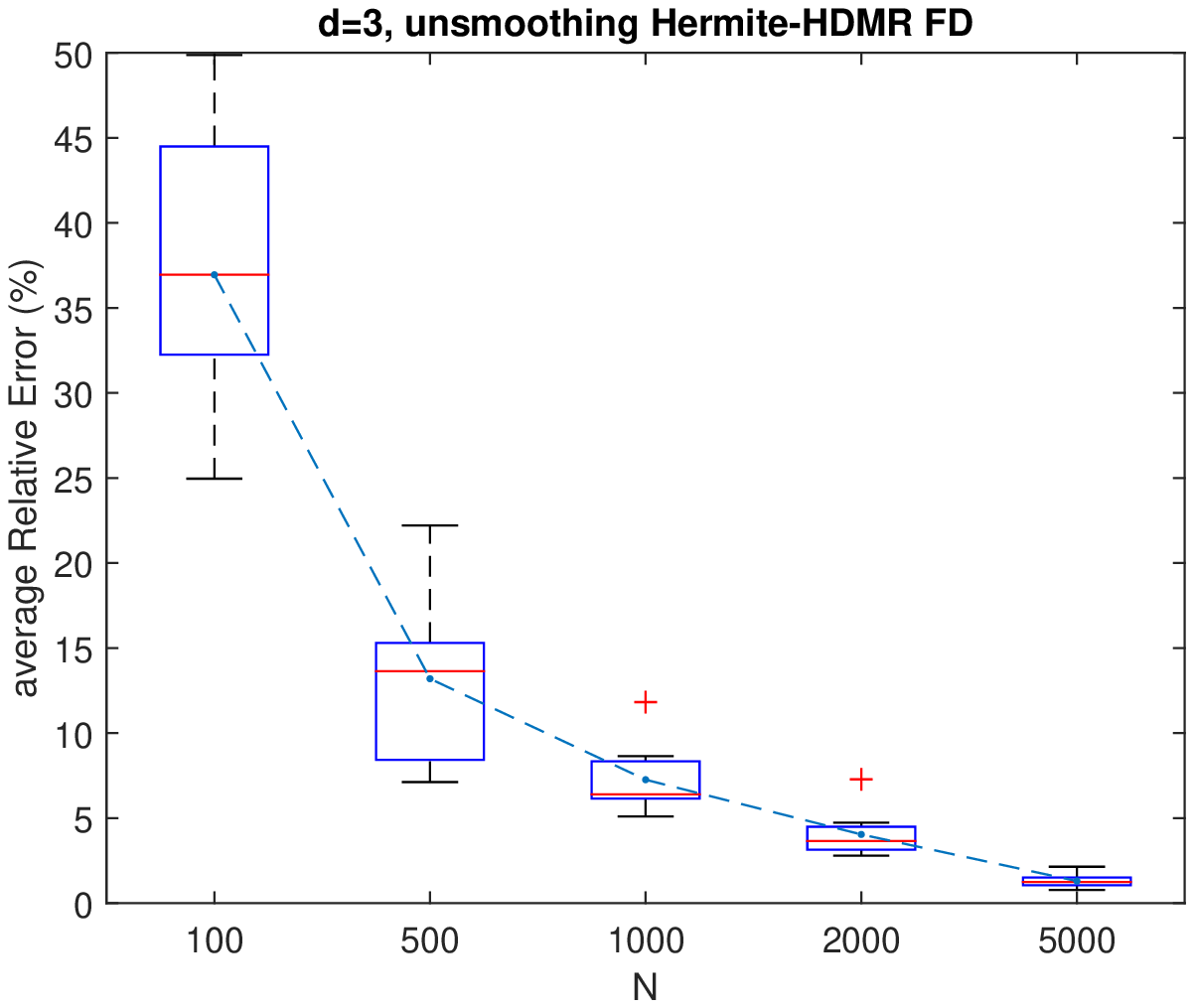}}
\subfigure{\label{HH.fig.3a2}
\includegraphics[width=0.5\textwidth]{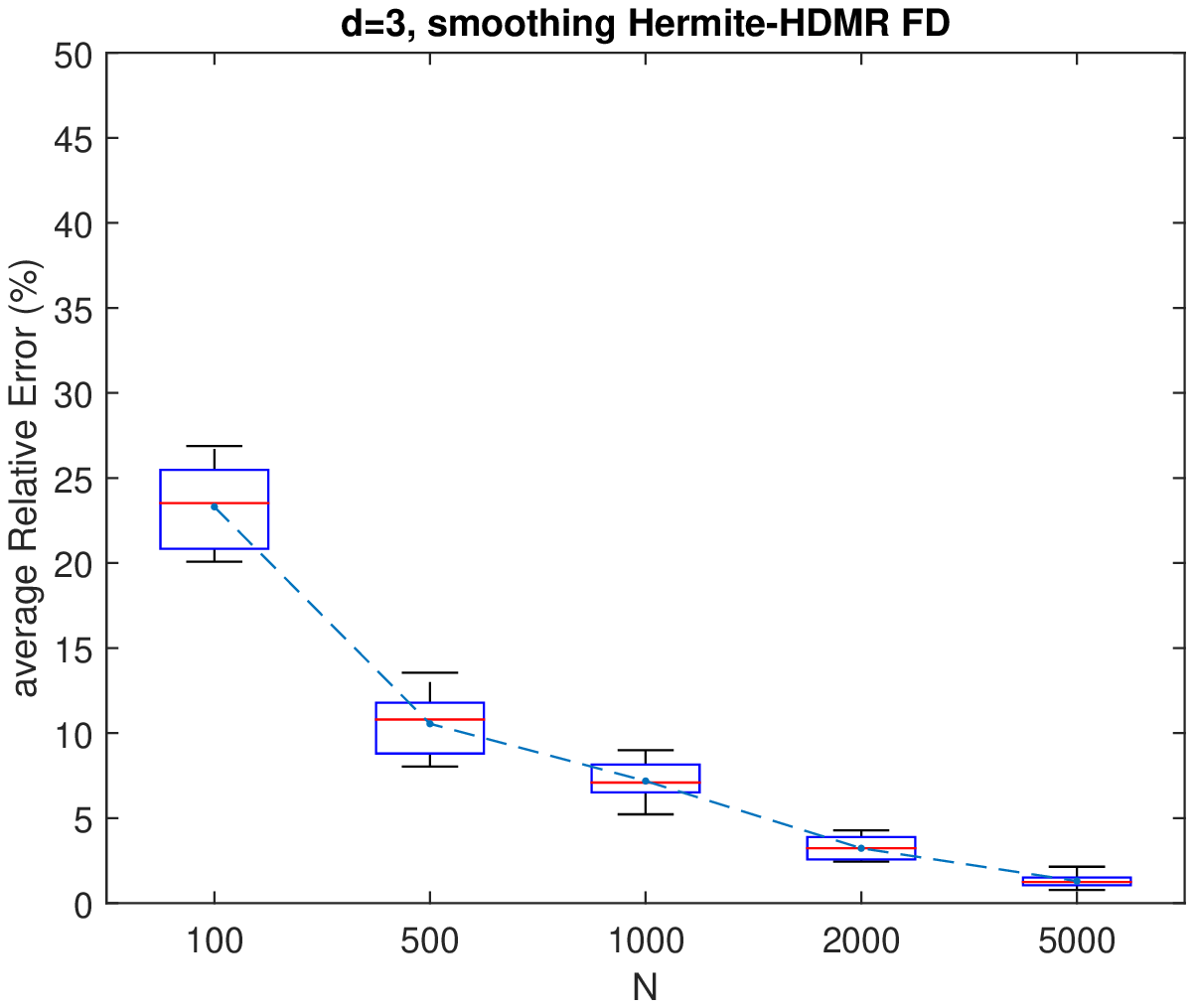}}
\end{minipage}
\begin{minipage}{1.0\textwidth}
\subfigure{\label{HH.fig.3b1}
\includegraphics[width=0.5\textwidth]{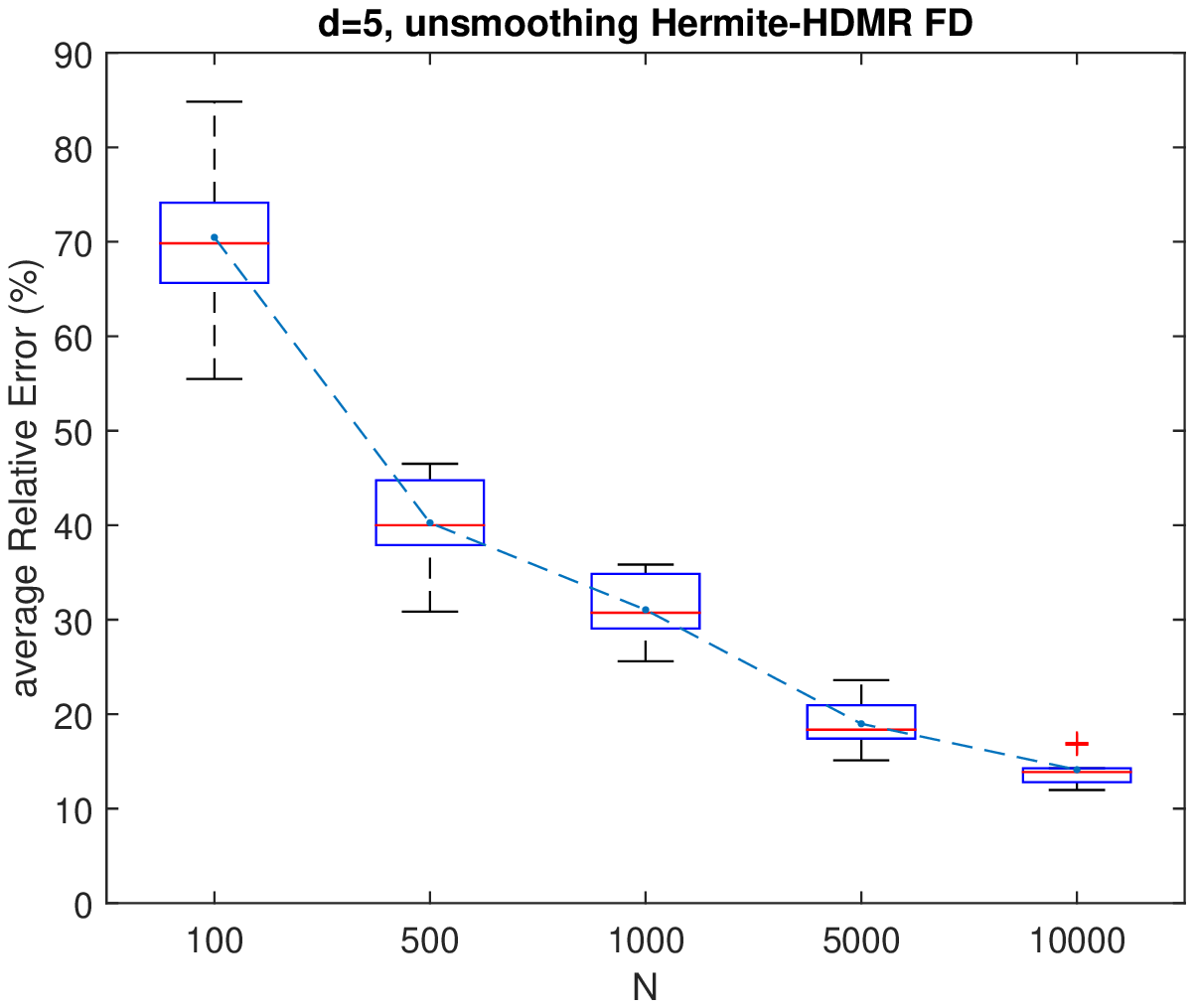}}
\subfigure{\label{HH.fig.3b2}
\includegraphics[width=0.5\textwidth]{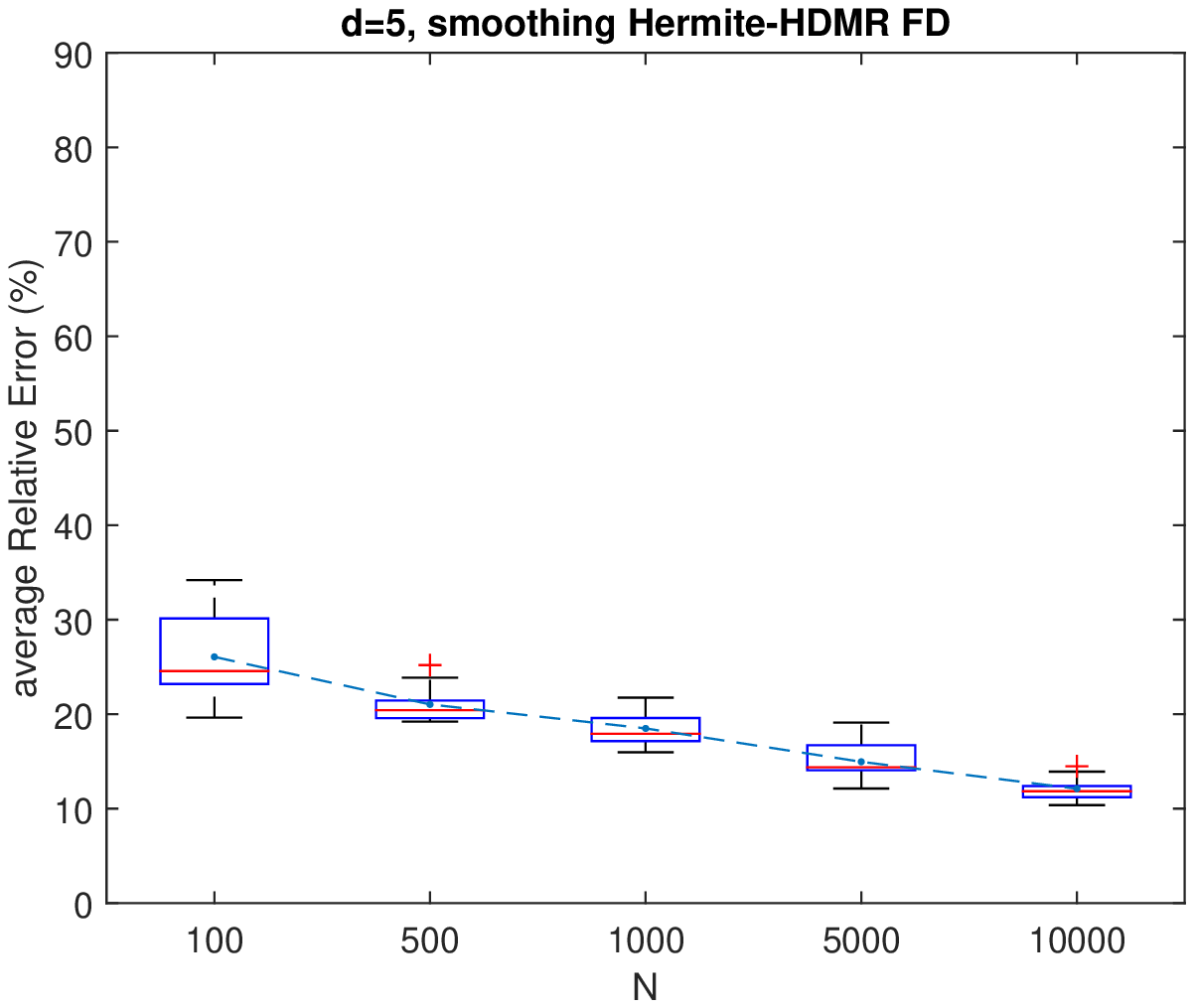}}
\end{minipage}
\begin{minipage}{1.0\textwidth}
\subfigure{\label{HH.fig.3c1}
\includegraphics[width=0.5\textwidth]{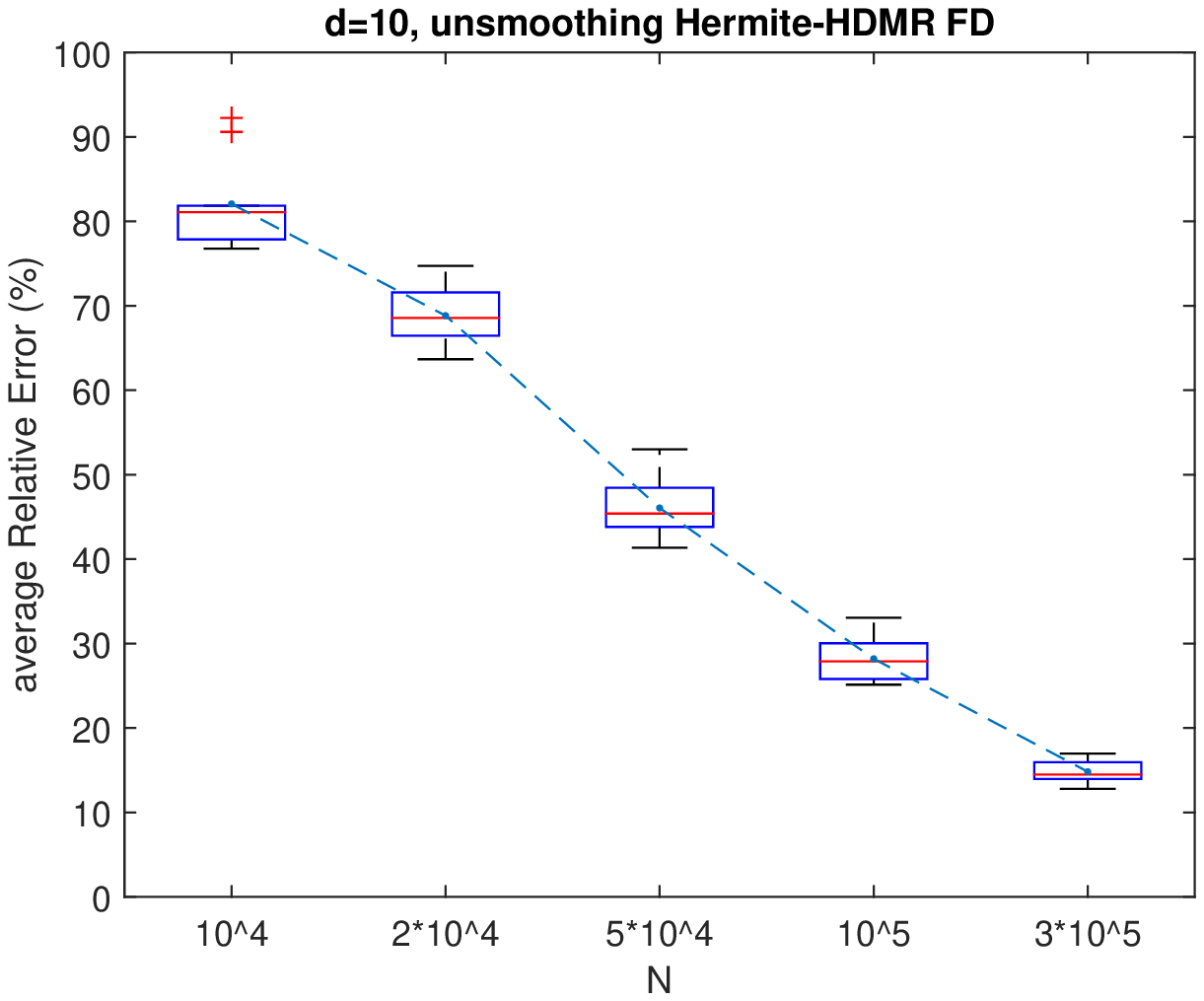}}
\subfigure{\label{HH.fig.3c2}
\includegraphics[width=0.5\textwidth]{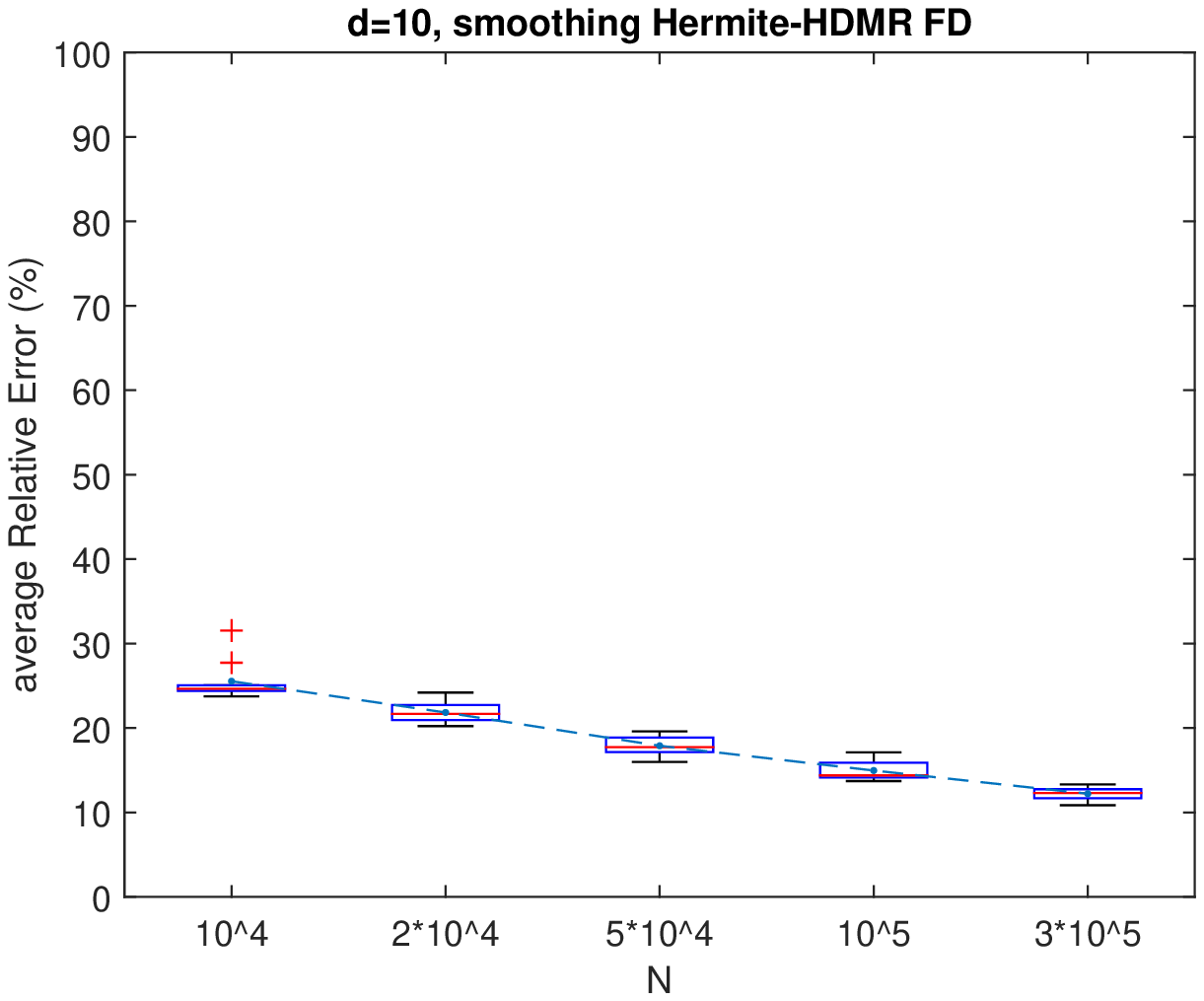}}
\end{minipage}
\caption{Results for test problem \eqref{HH.eq.NE3}}
\label{HH.fig.ne3}
\end{figure}
}

\section{Conclusions}
\label{HH.s6}

In this work, we proposed a meshless Hermite-HDMR FD method to solve high-dimensional Dirichlet problems. The approach is based on the local Hermite-HDMR expansion with an additional smoothing technique. The multiple Hermite series is connected with the HDMR decomposition and the mixed regularity condition for obtaining a class of Hermite-HDMR approximations; the relevant error estimate is theoretically built in a class of Hermite spaces. The method can not only provide high order convergence but also effectively control the degrees of freedom in high-dimensions. Numerical experiments in dimensions up to $30$ show that the resulting approximations are of very high quality, and we propose that the Hermite-HDMR finite difference method is attractive for solving high-dimensional Dirichlet problems.

\begin{acknowledgements}
  X.L. and X.X. acknowledge support from the National Science Foundation (Grant No. CHE-1763198), and H.R. acknowledges support from the Templeton Foundation (Grant No. 52265).
\end{acknowledgements}

% BibTeX users please use one of
%\bibliographystyle{spbasic}      % basic style, author-year citations
%\bibliographystyle{spmpsci}      % mathematics and physical sciences
%\bibliographystyle{spphys}       % APS-like style for physics
%\bibliography{}   % name your BibTeX data base
\bibliographystyle{spbasic}
\bibliography{MReferences}
% Non-BibTeX users please use

\end{document}